\def\picAAA{
\begin{tikzpicture}
\draw[thick]
    (0,0)--(0,1)--(1,1)--(1,0)--(0,0)
    (-0.7071,0.7071)--(-0.7071,1.7071)--(0.2929,1.7071)--(0.2929,0.7071)--(-0.7071,0.7071)
    (0.7071,0.7071)--(0.7071,1.7071)--(1.7071,1.7071)--(1.7071,0.7071)--(0.7071,0.7071)
    (0,1.4142)--(0,2.4142)--(1,2.4142)--(1,1.4142)--(0,1.4142)
    (0,0)--(-0.7071,0.7071)--(0,1.4142)--(0.7071,0.7071)--(0,0)
    (1,0)--(0.2929,0.7071)--(1,1.4142)--(1.7071,0.7071)--(1,0)
    (0,1)--(-0.7071,1.7071)--(0,2.4142)--(0.7071,1.7071)--(0,1)
    (1,1)--(0.2929,1.7071)--(1,2.4142)--(1.7071,1.7071)--(1,1)
    (-1.7071,0.7071)--(-0.7071,0.7071);
\draw[thick,fill=white]
    (0,0) circle (2pt)
    (0,1) circle (2pt)
    (1,1) circle (2pt)
    (1,0) circle (2pt)
    (-0.7071,0.7071) circle (2pt)
    (-0.7071,1.7071) circle (2pt)
    ( 0.2929,1.7071) circle (2pt)
    ( 0.2929,0.7071) circle (2pt)
    (0.7071,0.7071) circle (2pt)
    (0.7071,1.7071) circle (2pt)
    (1.7071,1.7071) circle (2pt)
    (1.7071,0.7071) circle (2pt)
    (0,1.4142) circle (2pt)
    (0,2.4142) circle (2pt)
    (1,2.4142) circle (2pt)
    (1,1.4142) circle (2pt)
    (-1.7071,0.7071) circle (2pt);
\end{tikzpicture}
}
\def\picBBB{
\begin{tikzpicture}
\draw[thick]
    (90:1)--(150:1)--(210:1)--(270:1)--(330:1)--(30:1)--(90:1)
    (90:1)--(150:0.5)--(210:1)--(270:0.5)--(330:1)--(30:0.5)--(90:1)
    (150:0.5)--(150:1)
    (270:0.5)--(270:1)
    (30:0.5)--(30:1);
\draw[thick,fill=white]
    (90:1) circle (2pt)
    (150:1) circle (2pt)
    (150:0.5) circle (2pt)
    (210:1) circle (2pt)
    (270:1) circle (2pt)
    (270:0.5) circle (2pt)
    (330:1) circle (2pt)
    (30:1) circle (2pt)
    (30:0.5) circle (2pt);
\end{tikzpicture}
}
\def\picCCC{
\begin{tikzpicture}
\node[circle,draw,thick,fill=white,inner sep=1.5pt]  (n0) at (  0:1.5) {};
\node[circle,draw,thick,fill=white,inner sep=1.5pt]  (n1) at ( 45:1.5) {};
\node[circle,draw,thick,fill=white,inner sep=1.5pt]  (n2) at ( 90:1.5) {};
\node[circle,draw,thick,fill=white,inner sep=1.5pt]  (n3) at (135:1.5) {};
\node[circle,draw,thick,fill=white,inner sep=1.5pt]  (n4) at (180:1.5) {};
\node[circle,draw,thick,fill=white,inner sep=1.5pt]  (n5) at (225:1.5) {};
\node[circle,draw,thick,fill=white,inner sep=1.5pt]  (n6) at (270:1.5) {};
\node[circle,draw,thick,fill=white,inner sep=1.5pt]  (n7) at (315:1.5) {};
\node[circle,draw,thick,fill=white,inner sep=1.5pt]  (n8) at (  0:0.8) {};
\node[circle,draw,thick,fill=white,inner sep=1.5pt]  (n9) at ( 45:0.8) {};
\node[circle,draw,thick,fill=white,inner sep=1.5pt] (n10) at ( 90:0.8) {};
\node[circle,draw,thick,fill=white,inner sep=1.5pt] (n11) at (135:0.8) {};
\node[circle,draw,thick,fill=white,inner sep=1.5pt] (n12) at (180:0.8) {};
\node[circle,draw,thick,fill=white,inner sep=1.5pt] (n13) at (225:0.8) {};
\node[circle,draw,thick,fill=white,inner sep=1.5pt] (n14) at (270:0.8) {};
\node[circle,draw,thick,fill=white,inner sep=1.5pt] (n15) at (315:0.8) {};

\draw[thick]
    (n0)--(n1)--(n2)--(n3)--(n4)--(n5)--(n6)--(n7)--(n0)
    (n0)--(n2)--(n4)--(n6)--(n0)
    (n1)--(n3)--(n5)--(n7)--(n1)
    (n0)--(n9)--(n2)--(n11)--(n4)--(n13)--(n6)--(n15)--(n0)
    (n1)--(n10)--(n3)--(n12)--(n5)--(n14)--(n7)--(n8)--(n1)
     (n8)--(n11)--(n13)-- (n8)
     (n9)--(n12)--(n14)-- (n9)
    (n10)--(n13)--(n15)--(n10)
    (n11)--(n14)-- (n8)--(n11)
    (n12)--(n15)-- (n9)--(n12)
    (n13)-- (n8)--(n10)--(n13)
    (n14)-- (n9)--(n11)--(n14)
    (n15)--(n10)--(n12)--(n15);
\end{tikzpicture}
}
\newtheorem{thm}{Theorem}[section]
\newtheorem{cor}[thm]{Corollary}
\newtheorem{lem}[thm]{Lemma}
\newtheorem{prop}[thm]{Proposition}
\newtheorem{obs}[thm]{Observation}
\newtheorem{quest}[thm]{Question}
\theoremstyle{definition}
\newtheorem{rem}[thm]{Remark}
\theoremstyle{definition}
\theoremstyle{definition}
\newtheorem{ex}[thm]{Example}
\definecolor{red}{rgb}{.8,0,0}
\definecolor{blu}{rgb}{0,0,1}
\def\0{{\bf 0}}
\def\Fq{\mathbb{F}_{q}}
\newcommand{\bone}{\ensuremath{\mathds{1}}} 
\def\mtx#1{\begin{bmatrix} #1 \end{bmatrix}}
\def\ord#1{| #1 |} 
\newcommand{\R}{{\mathbb R}}
\newcommand{\bx}{{\bf x}}
\newcommand{\bv}{{\bf v}}
\newcommand{\by}{{\bf y}}
\newcommand{\bz}{{\bf z}}
\newcommand{\spec}{\operatorname{spec}}
\newcommand{\dspec}{\operatorname{spec}_{\D}}
\newcommand{\qD}{q_{\D}}
\newcommand{\diam }{\operatorname{diam}}
\newcommand{\Z }{\operatorname{Z}}
\newcommand{\M}{\operatorname{M}}
\newcommand{\Gc}{\overline{G}}
\newcommand{\G}{\mathcal G}
\newcommand{\D}{\mathcal D}
\newcommand{\A}{\mathcal A}
\newcommand{\tr}{\operatorname{tr}}
\newcommand{\cp}{\, \Box\,}
\newcommand{\OL}{\overline}
\newcommand{\ds}{\displaystyle}
\newcommand{\bit}{\begin{itemize}}
\newcommand{\eit}{\end{itemize}}
\newcommand{\ben}{\begin{enumerate}}
\newcommand{\een}{\end{enumerate}}
\newcommand{\beq}{\begin{equation}}
\newcommand{\eeq}{\end{equation}}
\newcommand{\bea}{\begin{eqnarray*}}
\newcommand{\eea}{\end{eqnarray*}}
\newcommand{\bean}{\begin{eqnarray}}
\newcommand{\eean}{\end{eqnarray}}
\newcommand{\bpf}{\begin{proof}}
\newcommand{\epf}{\end{proof}}
\newcommand{\x}{\times}
\newcommand{\lf}{\left\lfloor}
\newcommand{\rf}{\right\rfloor}
\newcommand{\la}{\langle}
\newcommand{\ra}{\rangle}
\title{On the distance spectra of graphs}
\author{Ghodratollah Aalipour\thanks{Department of Mathematics and Computer Sciences, Kharazmi University, 50 Taleghani St., Tehran, Iran and Department of Mathematical and Statistical Sciences, University of Colorado Denver, CO, USA (alipour.ghodratollah@gmail.com).}\and Aida Abiad\thanks{Department of Quantitative Economics, Operations Research, Maastricht University, Maastricht, The Netherlands, (aidaabiad@gmail.com).}\and Zhanar Berikkyzy\thanks{Department of Mathematics, Iowa State University, Ames, IA 50011, USA (zhanarb@iastate.edu, 
keheysse@iastate.edu, lhogben@iastate.edu, chlin@iastate.edu).}\and 
Jay Cummings\thanks{Department of Mathematics, University of California, San Diego, La Jolla, CA 92037, USA (jjcummings@math.ucsd.edu, mtait@math.ucsd.edu).}\and Jessica De Silva\thanks{Department of Mathematics, University of Nebraska-Lincoln, Lincoln, NE 68588, USA (jessica.desilva@huskers.unl.edu).}\and 
Wei Gao\thanks{Department of Mathematics and Statistics, Georgia State University, Atlanta, GA 30303, USA (wgao2@gsu.edu).}\and  Kristin Heysse\footnotemark[3]\and Leslie Hogben\footnotemark[3] \thanks{American Institute of Mathematics, 600 E. Brokaw Road, San Jose, CA 95112, USA (hogben@aimath.org).}\and Franklin H.J. Kenter\thanks{Department of Computational and Applied Mathematics, Rice University, Houston, TX 77005, USA (franklin.h.kenter@rice.edu).}\and Jephian C.-H. Lin\footnotemark[3]\and   Michael Tait\footnotemark[4]}
\begin{document}

\maketitle


\begin{abstract}
The distance matrix of a graph $G$ is the matrix containing the pairwise distances between vertices. The distance eigenvalues of  $G$ are the eigenvalues of its distance matrix and they form the distance spectrum of $G$. We determine the distance spectra of halved cubes, double odd graphs, and Doob graphs, completing the determination of distance spectra of distance regular graphs having exactly one positive distance eigenvalue.  We characterize strongly regular graphs having more positive than negative distance eigenvalues. We give examples of graphs with few distinct  distance eigenvalues but lacking regularity properties.  We also determine the determinant and inertia of the distance matrices of lollipop and barbell  graphs.
\end{abstract}

\noindent\textbf{Keywords.} distance matrix, eigenvalue, distance regular graph, Kneser graph, double odd graph, halved cube, Doob graph, lollipop graph, barbell graph, distance spectrum, strongly regular graph, optimistic graph, 
determinant, inertia, graph

\noindent\textbf{AMS subject classifications.} 05C12, 05C31, 05C50, 15A15, 15A18, 15B48, 15B57

\section{Introduction}\label{sintro}

 The \emph{distance matrix} $\mathcal{D}(G)=[d_{ij}]$ of a graph $G$ is the matrix indexed by the vertices of $G$ where $d_{ij}=d(v_i, v_j)$ is the {\em distance} between the vertices $v_i$ and $v_j$, i.e., the length of a shortest path between $v_i$ and $v_j$.  Distance matrices were introduced in the study of a data communication problem in \cite{GP71}.  This problem involves finding appropriate addresses so that a message can move efficiently through a series of loops from its origin to its destination, choosing the best route at each switching point.  Recently there has been renewed interest in the loop switching problem \cite{PersiTalk}.  There has also been extensive work on distance spectra (eigenvalues of distance matrices); see \cite{AH14} for a recent survey.  

In \cite{KS1994}, the authors classify the distance regular graphs having only one positive distance eigenvalue. Such graphs  are directly related to a metric hierarchy for finite connected graphs (and more generally, for finite distance spaces, see \cite{A1984,D1960, DG1993,K1967}), which makes these graphs particularly interesting. In Section  \ref{sdistreg} we find the distance spectra of Doob graphs,  double odd graphs, and  halved cubes,  completing the determination of distance spectra of distance regular graphs that have one positive distance eigenvalue.
In Section \ref{sSRG} we characterize strongly regular graphs having more positive than negative distance eigenvalues in terms of their parameters, generalizing results in \cite{Az14}, and apply this characterization to show several additional infinite families of strongly regular graphs have this property.

Section \ref{sndeval} contains examples of graphs with specific properties and a small number of distinct distance eigenvalues. Answering a {question} in \cite{AP15}, we provide a construction for a family of connected graphs with arbitrarily large diameter that  have no more than 5 distinct distance eigenvalues but are not distance regular (Example \ref{exAP}).  We exhibit a family of graphs with arbitrarily many distinct degrees but having exactly 5 distinct distance eigenvalues (Example \ref{exmanydeg}). Finally, we give two lower bounds for the number of distinct distance eigenvalues of a graph. The first bound is for a tree, in terms of its diameter, and the second is for any graph in terms of the zero forcing number of its complement.

  In Persi Diaconis'  talk on distance spectra at the ``Connections in Discrete Mathematics: A celebration of the work of Ron Graham'' \cite{PersiTalk}, he suggested it would be worthwhile to study the distance matrix of  a clique with a path adjoined (sometimes called a lollipop graph), and in 
 Section \ref{slollipop} we determine determinants and inertias of these graphs, of barbell graphs, and of generalized barbell graphs (a family that includes both lollipops and barbells).   
The remainder of this introduction contains definitions and notation used throughout.

 All graphs are connected, simple, undirected, and finite of order at least two.  Let $G$ be a graph. The maximum distance between any two vertices in $G$ is called the \emph{diameter} of $G$ and is denoted by $\diam(G)$. Two vertices  are adjacent in  the {\em complement} of $G$, denoted by $\Gc$, if and only if they are nonadjacent in $G$.   Let $\A(G)$ denote the {\em adjacency matrix} of $G$, that is, $\A(G)=[a_{ij}]$ is the matrix indexed by the vertices of $G$ where $a_{ij}=1$ if $\{v_i,v_j\}\in E(G)$ and is $0$ otherwise.     
  The $n\x n$ all ones matrix is denoted by  $J$ and  the all ones vector by $\bone$.  
A graph $G$ is {\em regular} if every vertex has the same degree, say $k$; equivalently, $\A(G)\bone=k \bone$; observe that $k$ is the spectral radius of $\A(G)$. 

 Since $\mathcal{D}(G)$ is a real symmetric matrix,  its eigenvalues, called \emph{distance eigenvalues} of $G$, are all real. The spectrum of $\mathcal{D}(G)$ is denoted by {$\dspec(G):=\{\rho,\theta_2,\dots,\theta_n\}$ where $\rho$ is the spectral radius,} 
 and  is called the \emph{distance spectrum} of the graph $G$.

The {\em inertia} of a real symmetric matrix is the triple of integers $(n_+,n_0,n_-)$, with the entries indicating the number of positive, zero, and negative eigenvalues, respectively (counting multiplicities).  Note the order $(n_+,n_0,n_-)$, while customary in spectral graph theory, is nonstandard in linear algebra, where it is more common to use $(n_+,n_-,n_0)$.  The spectrum of a matrix can be written as a multiset (with duplicates as needed), or as a list of distinct values with the exponents ($>1$) in parentheses indicating multiplicity.


\section{Strongly regular graphs}\label{sSRG}

 A $k$-regular graph $G$ of order $n$ is {\em strongly regular} with parameters
$(n, k, \lambda,\mu)$ if every pair of adjacent vertices has $\lambda$ common neighbors
and every pair of distinct nonadjacent vertices has $\mu$ common neighbors.  For a strongly regular graph $G$ with parameters
$(n, k, \lambda,\mu)$, $\mu=0$ is equivalent to $G$ is $\frac n {k+1}$ copies of $K_{k+1}$, so we assume $\mu>0$ and thus $G$ has diameter at most 2.   There is a well known connection between  the adjacency matrix of a graph of diameter at most $2$ and its distance matrix that was exploited in \cite{Az14}.  

\begin{rem}\label{diam2} A real symmetric matrix commutes with $J$ if and only if it has constant row sum.  
Suppose $A$ commutes with $J$ and $\rho$ is the constant row sum of $A$, so $J$ and $A$ have a common eigenvector of $\mathds{1}$. Since eigenvectors of real symmetric matrices corresponding to distinct eigenvalues are orthogonal, every eigenvector of $A$ for an eigenvalue other than $\rho$ is an eigenvector of $J$ for eigenvalue 0.

 Now suppose $G$ is a graph that has diameter at most $2$.  Then $\D(G)=2(J-I)-\A(G)$.  Now suppose in addition that $G$ is regular, so  $\A(G)$ commutes with $J$.   Thus $\dspec(G)=\{2n-2-\rho\}\cup\{-\lambda-2:\lambda\in\spec(\A(G))\mbox{ and }\lambda\ne\rho\}$. 
 \end{rem}

  Let $G$ be a strongly regular graph with parameters $(n,k,\lambda,\mu)$. It is known that the (adjacency) eigenvalues of  $G$ are $\rho=k$ of multiplicity $1$, $\theta:=\frac{1}{2}(\lambda-\mu+\sqrt{(\lambda-\mu)^2+4(k-\mu)})$ of multiplicity $m_\theta:=\frac{1}{2}
(n-1-\frac{2k+(n-1)(\lambda-\mu)}{\sqrt{(\lambda-\mu)^2+4(k-\mu)}})$, and $\tau:=\frac{1}{2}(\lambda-\mu-\sqrt{(\lambda-\mu)^2+4(k-\mu)})$ of multiplicity $m_\tau:=\frac{1}{2}
(n-1+\frac{2k+(n-1)(\lambda-\mu)}{\sqrt{(\lambda-\mu)^2+4(k-\mu)}})$ \cite[Chapter 10]{GR01}.\footnote{These formulas do work for $K_n$, but in that case $m_\theta =0$.}   
Thus the distance eigenvalues of $G$ are 
   \bea\rho_{\D}:=2(n-1)-k && \mbox{ of multiplicity } 1\\
  \theta_{\D}:=-\frac{1}{2}\left(\lambda-\mu+\sqrt{(\lambda-\mu)^2+4(k-\mu)}\right)-2&& \mbox{ of multiplicity }m_\theta=\frac{1}{2}
\left(n-1-\frac{2k+(n-1)(\lambda-\mu)}{\sqrt{(\lambda-\mu)^2+4(k-\mu)}}\right)\\
\tau_{\D}:=-\frac{1}{2}\left(\lambda-\mu-\sqrt{(\lambda-\mu)^2+4(k-\mu)}\right)-2&& \mbox{ of multiplicity }m_\tau=\frac{1}{2}
\left(n-1+\frac{2k+(n-1)(\lambda-\mu)}{\sqrt{(\lambda-\mu)^2+4(k-\mu)}}\right).
\eea
For a derivation of these values using  quotient matrices, see \cite[p. 262]{AP15}.

\subsection{Optimistic strongly regular graphs}\label{sopt}

  A graph is {\em optimistic} if it has more positive than negative distance eigenvalues.  Graham and Lov\'asz raised the question of whether optimistic graphs exist (although they did not use the term).  This question was answered positively by Azarija in \cite{Az14}, where the term `optimistic' was introduced.  A strongly regular graph is a {\em conference graph} if  $(n,k,\lambda,\mu)=(n,\,\frac{n-1}2,\, \frac{n-5}4,\, \frac{n-1}4)$.
  In \cite{Az14} it is shown that conference graphs of order at least 13 are optimistic   and also that the  strongly regular graphs with parameters $(m^2, 3(m - 1),m, 6)$  are optimistic for $m \ge 5$. 
  Additional examples of optimistic strongly regular graphs, such as the Hall--Janko graph  with parameters (100,
36, 14, 12), and examples of optimistic graphs that are not strongly regular are also  presented there.

 \begin{thm}\label{SRGthm} Let $G$ be a strongly regular graph $G$ with parameters $(n,k,\lambda,\mu) $.  The graph $G$ is optimistic if and only if $\tau_{\D}>0$ and $m_\tau\ge m_\theta$. That is, $G$  is optimistic if and only if 
 \[\lambda < \frac{-4+\mu+k} 2\mbox{ and }\lambda\ge\mu-\frac {2k} {n-1}.\]
 \end{thm}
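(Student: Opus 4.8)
The plan is to read off the three distinct distance eigenvalues $\rho_\D$, $\theta_\D$, $\tau_\D$ together with their multiplicities (already displayed above via Remark \ref{diam2}), determine the sign of each, and thereby compute the inertia $(n_+,n_0,n_-)$; then I would translate the two resulting conditions on the multiplicities into the stated inequalities on $(n,k,\lambda,\mu)$. First I would fix the signs. Since $G$ is connected, $\D(G)$ is an irreducible nonnegative matrix, so by Perron--Frobenius its spectral radius $\rho_\D = 2(n-1)-k>0$ is a simple eigenvalue; it always contributes $1$ to $n_+$. For the restricted adjacency eigenvalue $\theta$, the formula gives $\theta\ge 0$ (because $k\ge\mu$ forces $\sqrt{(\lambda-\mu)^2+4(k-\mu)}\ge|\lambda-\mu|\ge \mu-\lambda$), hence $\theta_\D=-\theta-2\le -2<0$ always, so $\theta_\D$ contributes its full multiplicity $m_\theta$ to $n_-$. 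The only sign in doubt is that of $\tau_\D=|\tau|-2$.

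Next I would carry out the case analysis on the sign of $\tau_\D$, using that a noncomplete strongly regular graph has exactly three distinct adjacency eigenvalues and hence $m_\theta,m_\tau\ge 1$. If $\tau_\D\le 0$, then $n_+=1$ while $n_-\ge m_\theta\ge 1$, so $n_+>n_-$ fails and $G$ is not optimistic. If $\tau_\D>0$, then $n_+=1+m_\tau$ and $n_-=m_\theta$, so $G$ is optimistic exactly when $1+m_\tau>m_\theta$, which for integers is equivalent to $m_\tau\ge m_\theta$. This yields the first form of the criterion: $G$ is optimistic if and only if $\tau_\D>0$ and $m_\tau\ge m_\theta$.

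It remains to rewrite these two conditions in terms of the parameters. For $\tau_\D>0$ I would rearrange to $\sqrt{(\lambda-\mu)^2+4(k-\mu)}>4+(\lambda-\mu)$; when the right-hand side is negative the inequality holds automatically, and otherwise squaring cancels the $(\lambda-\mu)^2$ terms and leaves $4(k-\mu)>16+8(\lambda-\mu)$, i.e. $\lambda<\tfrac{-4+\mu+k}{2}$. For $m_\tau\ge m_\theta$, subtracting the two multiplicity expressions shows the condition is equivalent to $\tfrac{2k+(n-1)(\lambda-\mu)}{\sqrt{(\lambda-\mu)^2+4(k-\mu)}}\ge 0$; since the radical is positive this reduces to $2k+(n-1)(\lambda-\mu)\ge 0$, i.e. $\lambda\ge \mu-\tfrac{2k}{n-1}$, completing the equivalence.

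The step I expect to require the most care is the squaring in the translation of $\tau_\D>0$, which is valid only when $4+(\lambda-\mu)\ge 0$; the complementary case must be handled separately and checked to be consistent with the claimed bound. I would also treat the degenerate boundary situations explicitly, namely the complete graph (where $m_\theta=0$, excluded once $\mu>0$ is assumed) and the case $\tau_\D=0$, to ensure the strict inequality on $\tau_\D$ and the nonstrict inequality $m_\tau\ge m_\theta$ come out exactly as stated.
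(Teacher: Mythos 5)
Your proposal follows essentially the same route as the paper's proof: reduce optimism to $\tau_{\D}>0$ and $m_\tau\ge m_\theta$ by noting $\rho_{\D}>0$ and $\theta_{\D}<0$, translate $\tau_{\D}>0$ by the same squaring argument with the same case split on the sign of $4+\lambda-\mu$, and translate $m_\tau\ge m_\theta$ via the multiplicity-difference formula with positive denominator. The one step you deferred---verifying that in the case $4+\lambda-\mu<0$ the bound $\lambda<\frac{-4+\mu+k}{2}$ still holds, so the equivalence is not broken---is exactly the point the paper closes in two lines using $k\ge\lambda+1$ (it also follows from $\mu\le k$), so your plan is correct and complete in substance.
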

\bpf  Observe that $\theta_{\D}<0$.  Thus $G$ is optimistic if and only if $\tau_{\D}>0$ and $m_\tau\ge m_\theta$.  Simple algebra shows that  $0 < \tau_{\D}$ is equivalent to $\lambda < \frac{-4+\mu+k} 2$:  
\bea 
0 & < & -2-\frac 1 2\left(\lambda-\mu-\sqrt{(\lambda-\mu)^2+4(k-\mu)}\right)\\
4 +\lambda-\mu& < & \sqrt{(\lambda-\mu)^2+4(k-\mu)}.\eea
There are two cases.  First assume $4 +\lambda-\mu\ge 0$, so
\bea(\lambda-\mu)^2+8(\lambda-\mu)+16& < & (\lambda-\mu)^2+4(k-\mu)\\
\lambda & < & \frac{-4+\mu+k} 2.\eea
Now assume $4 +\lambda-\mu<0$, or equivalently, $\lambda-\mu<-4$.  For any strongly regular graph,  $k\ge \lambda+1$.  Thus
\bea
(\lambda-\mu)+(\lambda-k)& < & -4\\
\lambda & < & \frac{-4+\mu+k} 2.\eea

It is well known \cite[p. 222]{GR01} (and easy to see) that 
\beq\label{eq0}m_\tau-m_\theta=\frac{2k+(n-1)(\lambda-\mu)}{\sqrt{(\lambda-\mu)^2+4(k-\mu)}}.\eeq
The denominator is always positive, and thus $m_\tau\ge m_\theta$ if and only if $\lambda\ge\mu-\frac {2k} {n-1}$. 
\epf

 \begin{cor}\label{SRGparam}
Whether or not a strongly regular graph   is  optimistic depends only on its parameters  $(n,k,\lambda,\mu)$.
 \end{cor}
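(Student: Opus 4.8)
The plan is to read off the conclusion directly from Theorem \ref{SRGthm}. That theorem characterizes optimism of a strongly regular graph $G$ by the pair of inequalities $\lambda < \frac{-4+\mu+k}{2}$ and $\lambda \ge \mu - \frac{2k}{n-1}$, and both inequalities are relations among the four numbers $n$, $k$, $\lambda$, $\mu$ alone. Hence whether $G$ satisfies them---and therefore whether $G$ is optimistic---is completely determined once the parameter tuple $(n,k,\lambda,\mu)$ is fixed. First I would state this observation, and that constitutes the entire proof.

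It is worth recording why the underlying machinery supports this. The distance eigenvalues $\rho_{\D}$, $\theta_{\D}$, $\tau_{\D}$ displayed before Theorem \ref{SRGthm}, together with their multiplicities $1$, $m_\theta$, and $m_\tau$, are each given by explicit closed-form expressions in $n$, $k$, $\lambda$, $\mu$. Consequently the full inertia $(n_+,n_0,n_-)$ of $\D(G)$ is a function of the parameters, and the optimistic condition $n_+ > n_-$ is a particular Boolean function of that inertia; an alternative, self-contained proof can therefore bypass the simplified inequalities and argue directly from these formulas.

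The point I would stress is that the corollary is genuine content only because of what Theorem \ref{SRGthm} already establishes, not because of any new difficulty here: two strongly regular graphs sharing the same parameter tuple need not be isomorphic, and in principle a spectral quantity could distinguish them. The theorem shows optimism is not such a distinguishing quantity. Thus there is no real obstacle to overcome in proving the corollary; the only care needed is to note that each ingredient in the characterization is a function of $(n,k,\lambda,\mu)$ and of nothing else.
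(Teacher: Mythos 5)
Your proof is correct and matches the paper's approach exactly: the corollary is an immediate consequence of Theorem \ref{SRGthm}, since the characterizing inequalities $\lambda < \frac{-4+\mu+k}{2}$ and $\lambda\ge\mu-\frac{2k}{n-1}$ involve only the parameters $(n,k,\lambda,\mu)$. The paper offers no separate proof beyond this observation, so your additional remarks on the closed-form eigenvalue expressions, while harmless, are not needed.
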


There are several additional families of strongly regular graphs for which the conditions in Theorem \ref{SRGthm} hold.
 
 \begin{cor}\label{a=c}
A strongly regular graph with parameters $(n,k,\mu,\mu)$ is  optimistic if and only if  $k>\mu+4$.
 \end{cor}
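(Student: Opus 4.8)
The plan is to apply Theorem \ref{SRGthm} directly, specializing the two inequalities to the case $\lambda=\mu$. Theorem \ref{SRGthm} says that a strongly regular graph with parameters $(n,k,\lambda,\mu)$ is optimistic precisely when
\[
\lambda < \frac{-4+\mu+k}{2}
\qquad\text{and}\qquad
\lambda \ge \mu-\frac{2k}{n-1}.
\]
Since our graph has parameters $(n,k,\mu,\mu)$, I would substitute $\lambda=\mu$ into both inequalities and simplify, so the whole argument reduces to two short computations.

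For the first inequality, setting $\lambda=\mu$ gives $\mu < \frac{-4+\mu+k}{2}$, which after multiplying through by $2$ and cancelling one copy of $\mu$ becomes $\mu < -4+k$, i.e. $k>\mu+4$. This is exactly the stated condition, so the first inequality of Theorem \ref{SRGthm} is equivalent to $k>\mu+4$.

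For the second inequality, substituting $\lambda=\mu$ yields $\mu \ge \mu-\frac{2k}{n-1}$, i.e. $\frac{2k}{n-1}\ge 0$. Since $k\ge 0$ and $n\ge 2$, this holds automatically and imposes no restriction. Hence the second condition of Theorem \ref{SRGthm} is vacuous in this case, and optimism is governed solely by the first condition. Combining the two observations, the graph is optimistic if and only if $k>\mu+4$.

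There is essentially no obstacle here: the statement is a direct specialization of Theorem \ref{SRGthm}, and the only point worth making explicit is that the second defining inequality becomes automatic when $\lambda=\mu$, leaving $k>\mu+4$ as the single effective constraint.
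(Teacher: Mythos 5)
Your proof is correct and follows essentially the same route as the paper: both specialize Theorem \ref{SRGthm} to $\lambda=\mu$, observe that the multiplicity condition $\lambda\ge\mu-\frac{2k}{n-1}$ is automatic (the paper phrases this as $m_\tau>m_\theta$, which is the same fact), and reduce the eigenvalue condition to $k>\mu+4$. Nothing further is needed.
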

 \bpf  By Theorem \ref{SRGthm},  $\lambda=\mu$ implies $m_\tau>m_\theta$,  and $\tau_{\D}>0$ is equivalent to \[0<  \frac{-4+\mu+k} 2-\lambda  =\frac{1}{2} (k-\mu-4).\qedhere\]
 \epf

The family of symplectic graphs  is defined using subspaces of a vector space over a field with a finite number of elements. Let $\Fq$ be the field with $q$ elements and consider as vertices of $Sp(2m,q)$  the one  dimensional subspaces of $\Fq^{2m}$ for $m\ge 2$; let $\la\bx\ra$ denote the subspace generated by $\bx$.  The {\em alternate matrix} of order $m$ over $\Fq$ is the $2m\x 2m$ matrix $A_m=\mtx{0 & 1\\-1 & 0}\oplus\cdots\oplus \mtx{0 & 1\\-1 & 0}$ with $m$ copies of $\mtx{0 & 1\\-1 & 0}$.  The  vertices $\la\bx\ra$ and $\la\by\ra$ are adjacent in $Sp(2m,q)$ if $\bx^T A_m\by\ne 0$.  See \cite[Section 8.11]{GR01} for $Sp(2m, 2)$  and \cite{TW06} for more general $q$.
It is known that the symplectic graph $Sp(2m,q)$ is a
strongly regular graph with parameters
\[
(n, k, \lambda, \mu)=
\left(\frac{q^{2m}-1}{q-1},\, q^{2m-1}, \,q^{2m-2}(q-1),\,q^{2m-2}(q-1)\right)
\]
 (see \cite[Section 10.12]{GR01} for $q=2$ or \cite[Theorem 2.1]{TW06} for more general $q$).  The next result is immediate from Corollary \ref{a=c}. 

  \begin{cor}
The symplectic graphs $Sp(2m,q)$ are optimistic for every $q$ and $m$ except   $q=2$ and $m= 2$.
 \end{cor}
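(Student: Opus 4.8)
The plan is to invoke Corollary~\ref{a=c} directly. The symplectic graph $Sp(2m,q)$ is strongly regular with $\lambda=\mu=q^{2m-2}(q-1)$, so its parameter list has the special form $(n,k,\mu,\mu)$ to which Corollary~\ref{a=c} applies. Hence $Sp(2m,q)$ is optimistic precisely when the single inequality $k>\mu+4$ holds, and the whole problem reduces to deciding this inequality.

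First I would compute the gap $k-\mu$. Substituting $k=q^{2m-1}$ and $\mu=q^{2m-2}(q-1)$ gives
\[
k-\mu=q^{2m-1}-q^{2m-2}(q-1)=q^{2m-2},
\]
so the optimism criterion $k>\mu+4$ becomes simply $q^{2m-2}>4$.

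Finally I would identify the pairs $(q,m)$ for which this fails. Since $q$ is a prime power with $q\ge 2$ and $m\ge 2$ (so that the exponent satisfies $2m-2\ge 2$), we always have $q^{2m-2}\ge 2^2=4$. Equality holds if and only if both the base and the exponent are as small as possible, i.e.\ $q=2$ and $2m-2=2$, which forces $m=2$. Thus $q^{2m-2}>4$ holds for all admissible $(q,m)$ except $(q,m)=(2,2)$, yielding exactly the claimed family of optimistic graphs with the single stated exception.

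There is no genuine obstacle here: once Corollary~\ref{a=c} is in hand, the argument is a one-line substitution followed by the elementary observation that $q^{2m-2}$ attains its minimum value $4$ uniquely at $(q,m)=(2,2)$. The only point requiring a moment's care is remembering that $q$ ranges over prime powers (not arbitrary integers) and that $m\ge 2$, but neither restriction changes the conclusion since the minimum is already realized within the admissible range.
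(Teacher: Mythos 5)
Your proposal is correct and follows exactly the paper's route: the paper derives this corollary as "immediate from Corollary~\ref{a=c}," and your computation $k-\mu=q^{2m-1}-q^{2m-2}(q-1)=q^{2m-2}$ together with the observation that $q^{2m-2}>4$ fails only at $(q,m)=(2,2)$ is precisely the omitted verification.
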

 
There are additional families of optimistic strongly regular graphs with parameters $(n,k,\mu,\mu)$.  One example is the family $O_{2m+1}(3)$ on one type of nonisotropic points, which has parameters \[\left(\frac{3^m \left(e+3^m\right)}2,\,\frac{3^{m-1} \left(3^m-e\right)}2, \, \frac{3^{m-1} \left(3^{m-1}-e\right)}2,\, \frac{3^{m-1} \left(3^{m-1}-e\right)}2\right)\] with $m\ge 2$ and $e=\pm 1$  \cite{Bro}.  

The more common definition of a {conference graph} is  a strongly regular graph with $m_\theta=m_\tau$; this is equivalent to the definition given earlier as  $(n,k,\lambda,\mu)=(n,\,\frac{n-1}2,\, \frac{n-5}4,\, \frac{n-1}4)$ \cite[Lemma 10.3.2]{GR01}. 

\begin{thm}\label{compopt} Let $G$ be a strongly regular graph with parameters $(n,k,\lambda,\mu)$.  Both $G$ and $\Gc$ are optimistic if and only if 
$G$ is a conference graph and $n\ge 13$.
\end{thm}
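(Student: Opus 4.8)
The plan is to apply Theorem~\ref{SRGthm} to both $G$ and $\Gc$ and show that the four resulting inequalities collapse to the conference‑graph condition together with $n\ge 13$. First I would record the standard fact that if $G$ is strongly regular with parameters $(n,k,\lambda,\mu)$, then $\Gc$ is strongly regular with parameters
\[
(\bar n,\bar k,\bar\lambda,\bar\mu)=\bigl(n,\; n-1-k,\; n-2-2k+\mu,\; n-2k+\lambda\bigr).
\]
Since $\Gc$ must be connected for its distance matrix (and hence optimism) to make sense, its parameter $\bar\mu=n-2k+\lambda$ is positive, so $\Gc$ also has diameter at most $2$ and Theorem~\ref{SRGthm} applies to it as well.

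Next I would substitute these complement parameters into the two inequalities characterizing optimism in Theorem~\ref{SRGthm}. The multiplicity inequality $\bar\lambda\ge\bar\mu-\frac{2\bar k}{n-1}$ for $\Gc$ should simplify (after the common $n$ terms cancel, using $\bar\lambda-\bar\mu=\mu-\lambda-2$ and $\frac{2\bar k}{n-1}=2-\frac{2k}{n-1}$) to $\lambda\le\mu-\frac{2k}{n-1}$, which is exactly the reverse of the inequality $\lambda\ge\mu-\frac{2k}{n-1}$ demanded by optimism of $G$. Hence both can hold only when $\lambda=\mu-\frac{2k}{n-1}$; by \eqref{eq0} this is precisely $m_\tau=m_\theta$, which by \cite[Lemma 10.3.2]{GR01} characterizes $G$ as a conference graph. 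Conceptually this is forced because complementation interchanges the two nonprincipal multiplicities (so $\bar m_\theta=m_\tau$ and $\bar m_\tau=m_\theta$), whence requiring $m_\tau\ge m_\theta$ for \emph{both} $G$ and $\Gc$ forces equality.

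It then remains to analyze the sign condition $\tau_{\D}>0$, i.e.\ $\lambda<\frac{\mu+k-4}{2}$, for a conference graph, where $(k,\lambda,\mu)=(\frac{n-1}{2},\frac{n-5}{4},\frac{n-1}{4})$. Substituting, I expect this inequality to reduce to $n>9$; since a conference graph satisfies $n\equiv 1\pmod 4$, this is equivalent to $n\ge 13$. For the converse direction I would note that a conference graph is self‑complementary in its parameters, so $\Gc$ is again a conference graph of the same order; thus once $n\ge 13$ all four inequalities of Theorem~\ref{SRGthm} hold for both $G$ and $\Gc$ (alternatively one may simply invoke the fact from \cite{Az14} that conference graphs of order at least $13$ are optimistic). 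Combining the two directions yields the claimed equivalence.

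The main obstacle is essentially bookkeeping: correctly computing the complement's parameters and verifying that the simplified forms of the two multiplicity inequalities are exact reverses of one another, so that equality (rather than merely a nonempty range) is forced. The conceptual content — that complementation swaps the roles of $m_\theta$ and $m_\tau$ — is what makes the two multiplicity conditions together pin down precisely the conference graphs, and recognizing this is the key step.
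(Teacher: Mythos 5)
Your proposal is correct and takes essentially the same approach as the paper: both apply Theorem~\ref{SRGthm} to $G$ and $\Gc$, observe that the complement's multiplicity inequality simplifies to the exact reverse of $\lambda\ge\mu-\frac{2k}{n-1}$, force equality (hence $m_\tau=m_\theta$ by \eqref{eq0}, so $G$ is a conference graph), and then settle the order condition. The only difference is cosmetic: the paper cites \cite{Az14} for the fact that conference graphs are optimistic if and only if $n\ge 13$, whereas you (optionally) rederive it by substituting the conference parameters into $\lambda<\frac{-4+\mu+k}{2}$ to get $n>9$, i.e.\ $n\ge 13$; both are valid.
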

\bpf The parameters of $\Gc$ are $(n,\,\bar k:=n-k-1,\, \bar\lambda:=n-2-2k+\mu,\,\bar\mu:=n-2k+\lambda)$.   By Theorem \ref{SRGthm} applied to $G$ and $\Gc$, \beq\label{eq1}\lambda\ge\mu-\frac {2k} {n-1}\eeq and  
\bean
\bar \lambda&\ge& \bar\mu-\frac {2\bar k} {n-1}\nonumber\\
n-2-2k+\mu &\ge& n-2k+\lambda-\frac {2(n-k-1)} {n-1}\nonumber\\
-2+\frac {2(n-k-1)} {n-1}+\mu &\ge& \lambda\nonumber\\
\mu-\frac {2k} {n-1} &\ge& \lambda.\label{eq2}
\eean 
By comparing \eqref{eq1} and \eqref{eq2}, we see that $\lambda=\mu-\frac {2k} {n-1}$, which by \eqref{eq0} implies $m_\tau-m_\theta=0$.  Thus $G$ is a conference graph.  It is  shown in \cite{Az14} that conference graphs  are optimistic  if and only if the order is at least 13. 
\epf


\subsection{Strongly regular graphs with one positive distance eigenvalue}

Distance regular graphs (see Section \ref{sdistreg} for the definition) having one positive distance eigenvalue were studied in \cite{KS1994};  strongly regular graphs are distance regular.  Here we make some elementary observations about strongly regular graphs with one positive distance eigenvalue that will be used in Section \ref{sdistreg}.  

   \begin{prop} Let $G$ be a  strongly regular graph  with parameters $(n,k,\lambda,\mu), n\ge 3, $ and (adjacency) eigenvalues $k,\theta$, and $\tau$.  Then $G$ is a conference graph, $G=K_n$, or $\tau\le -2$.  Thus $G$ has exactly one positive distance eigenvalue if and only if 
  $G=C_5$, $G=K_n$, or $\tau=-2$. \end{prop}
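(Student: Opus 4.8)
The plan is to reduce the entire statement to a single inequality on the adjacency eigenvalue $\tau$. By Remark~\ref{diam2} and the eigenvalue/multiplicity data recorded just above the proposition, the distance eigenvalues of a non-complete strongly regular graph are $\rho_{\D}=2(n-1)-k$ (multiplicity $1$), $\theta_{\D}=-\theta-2$ (multiplicity $m_\theta$), and $\tau_{\D}=-\tau-2$ (multiplicity $m_\tau$). Since the discriminant satisfies $(\lambda-\mu)^2+4(k-\mu)\ge(\lambda-\mu)^2$, we always have $\theta\ge 0$, hence $\theta_{\D}\le -2<0$; and $\rho_{\D}>0$ because $k\le n-1$. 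So the only distance eigenvalue whose sign is in question is $\tau_{\D}$, and $G$ has exactly one positive distance eigenvalue if and only if $\tau_{\D}\le 0$, i.e.\ if and only if $\tau\ge -2$. Everything therefore comes down to deciding when $\tau\ge -2$.

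Next I would prove the trichotomy (the first assertion), whose engine is the standard integrality dichotomy for the multiplicities. From \eqref{eq0}, $m_\tau-m_\theta=\frac{2k+(n-1)(\lambda-\mu)}{\sqrt{(\lambda-\mu)^2+4(k-\mu)}}$ must be an integer. If $(\lambda-\mu)^2+4(k-\mu)$ is not a perfect square then its square root is irrational, forcing the numerator to vanish; then $m_\theta=m_\tau$ and $G$ is a conference graph (cf.\ \cite[Lemma 10.3.2]{GR01}). Otherwise $\sqrt{(\lambda-\mu)^2+4(k-\mu)}$ is a nonnegative integer of the same parity as $\lambda-\mu$, so $\theta,\tau\in\mathbb{Z}$. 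In this integral case $\tau<0$, hence $\tau\le -1$; and if $\tau=-1$, the relations $\theta+\tau=\lambda-\mu$ and $\theta\tau=\mu-k$ give $\theta=k-\mu=\lambda-\mu+1$, i.e.\ $k=\lambda+1$. This forces the closed neighborhood of each vertex to be a clique, so the connected graph $G$ equals $K_{k+1}=K_n$. Thus an integral strongly regular graph is either $K_n$ or satisfies $\tau\le -2$, which together with the conference case yields the trichotomy. I expect this paragraph to carry the real content of the proof: the integrality argument plus the $\tau=-1\Rightarrow K_n$ computation is where the work lies, and it is the step most likely to need care.

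Finally I would combine the two pieces to obtain the characterization. One direction is immediate: if $G=K_n$ then $\tau=-1\ge -2$; if $\tau=-2$ then $\tau\ge -2$; and if $G=C_5$ then $\tau=\tfrac{-1-\sqrt5}{2}>-2$; so in each listed case $\tau\ge -2$ and there is exactly one positive distance eigenvalue. Conversely, assume $G$ has exactly one positive distance eigenvalue, equivalently $\tau\ge -2$, and apply the trichotomy. If $G=K_n$ we are done; if $\tau\le -2$ then $\tau=-2$ and we are done. In the remaining conference-graph case $\tau=\tfrac{-1-\sqrt n}{2}$, and $\tau\ge -2$ is equivalent to $\sqrt n\le 3$, i.e.\ $n\le 9$; since a conference graph satisfies $n\equiv 1\pmod 4$, the only possibilities are $n=5$, whose conference graph is $C_5$, and $n=9$, where $\tau=-2$. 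Both already appear in the list (the only subtlety being this overlap at $n=9$, where the conference graph is simultaneously captured by the $\tau=-2$ clause), completing the proof.
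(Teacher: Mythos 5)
Your proof is correct, and its skeleton matches the paper's: reduce everything to the sign of $\tau_{\D}=-\tau-2$ (equivalently, to whether $\tau\ge -2$), establish the trichotomy conference graph / $K_n$ / $\tau\le -2$, and then sort out which conference graphs survive. Within that skeleton there is one genuine difference worth noting, in the conference case: the paper invokes Azarija's result (see also Theorem~\ref{compopt}) that conference graphs of order at least $13$ are optimistic, hence have more than one positive distance eigenvalue, and then checks the orders $5$ and $9$ by hand; you instead compute $\tau=\frac{-1-\sqrt{n}}{2}$ for a conference graph and observe that $\tau\ge -2$ if and only if $n\le 9$, which together with $n\equiv 1\pmod 4$ leaves exactly $n=5$ (i.e., $C_5$) and $n=9$ (where $\tau=-2$). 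This makes your treatment of that case self-contained and uniform, with no dependence on the optimism machinery of Section~\ref{sopt}, which is arguably cleaner. You also write out in full two steps the paper dispatches by citation: the integrality dichotomy behind ``not conference $\Rightarrow$ $\theta,\tau\in\mathbb{Z}$'' (the paper cites \cite[Lemma 10.3.3]{GR01}) and the implication $\tau=-1\Rightarrow k=\lambda+1\Rightarrow G=K_n$ (the paper cites the quadratic relation from \cite{GR01} and asserts the conclusion). The only nitpick: in the integral case you assert $\tau<0$ without justification; the paper's one-line trace argument (from $\tr\A(G)=0$, $k>0$, and $\theta\ge 0$ one gets $m_\tau\tau\le -k<0$) fills this in, and you should include it since your $\tau\le -1$ step rests on it. This is an omission of a standard fact, not a flaw in the approach.
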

\bpf Assume $G$ has exactly one positive distance eigenvalue.  Since $\tau\le\theta$ and $\tr\A(G)=0$, $\tau<0$. It is known that if a strongly regular graph $G$  is not a conference graph, then $\tau$ and $\theta$ are integers \cite[Lemma 10.3.3]{GR01}.    If $\tau=-1$, then $k=\lambda+1$ (this follows from $\tau^2-(\lambda-\mu)\tau-(k-\mu)=0$ \cite[p. 219]{GR01}),  implying $G=K_n$. Since  $\tau<0$, $\tau_D\le 0$ if and only if $\tau\ge -2$. As shown in \cite{Az14} (see also Theorem \ref{compopt}), conference graphs of order at least 13  are optimistic and so have more than one positive eigenvalue.  The conference graph on 5 vertices is $C_5$, and the conference graph on 9 vertices has $\tau=-2$.  Thus  $\tau_D\le 0$ if and only if $\tau=-2$,  $G=C_5$, 
 or $G=K_n$.
\epf
\begin{obs}\label{SRG1pos}  There are several well known families of strongly regular graphs having $\tau=-2$, and thus having one positive distance eigenvalue. Examples of such graphs and their distance spectra include:
\ben
\item The cocktail party graphs $CP(m)$ are complete multipartite graphs $K_{2,2,\dots,2}$ on $m$ partite sets of order 2; $CP(m)$ is a strongly regular graph  with parameters $(2m, 2m-2,2m-4, 2m-2)$ and has distance spectrum  $\{2m, 0^{(m-1)}, -2^{(m)}\}$.
\item The line graph $L(K_m)$ with parameters  $(\frac {m(m - 1)} 2, 2m - 4, m - 2, 4)$ has distance spectrum \\$\{(m-1)(m-2),0^{(\frac{m(m-3)}2)}, (2-m)^{(m-1)}\}$.
\item The line graph $L(K_{m,m})$ with parameters $(m^2, 2m - 2, m - 2, 2)$ has distance spectrum \\$\{2m(m-1), 0^{((m-1)^2)}, (-m)^{(2(m-1))}\}$.
\een
\end{obs}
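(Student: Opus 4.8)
The plan is to handle all three families by the same two-step recipe: first compute the adjacency spectrum of each graph from its strongly regular parameters using the eigenvalue formulas recorded above, and then convert to the distance spectrum via Remark~\ref{diam2}. Each listed graph has $\mu>0$ and is not complete, so it has diameter exactly $2$ and the identity $\D(G)=2(J-I)-\A(G)$ applies; consequently $\rho_\D=2(n-1)-k$, $\theta_\D=-\theta-2$, and $\tau_\D=-\tau-2$, with the adjacency multiplicities $m_\theta,m_\tau$ carried over unchanged (the bijection on non-$\rho$ eigenvectors in Remark~\ref{diam2}). The unifying point is that each family satisfies $\tau=-2$, whence $\tau_\D=0$; since $\theta_\D<0$ always (as observed in the proof of Theorem~\ref{SRGthm}), the only positive distance eigenvalue is $\rho_\D$, which is exactly the assertion that these graphs have one positive distance eigenvalue.

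To verify $\tau=-2$ and to obtain the eigenvalues explicitly, I would in each case compute the discriminant $\Delta:=(\lambda-\mu)^2+4(k-\mu)$ and observe that it is a perfect square, so that $\theta$ and $\tau$ are integers. For $CP(m)$ with parameters $(2m,2m-2,2m-4,2m-2)$ one gets $\lambda-\mu=-2$, $k-\mu=0$, hence $\Delta=4$, giving $\theta=0$ and $\tau=-2$; thus $\rho_\D=2m$, $\theta_\D=-2$, $\tau_\D=0$. For $L(K_m)$ with $(\tfrac{m(m-1)}2,2m-4,m-2,4)$ one gets $\lambda-\mu=m-6$, $k-\mu=2m-8$, hence $\Delta=(m-2)^2$, giving $\theta=m-4$ and $\tau=-2$; thus $\rho_\D=(m-1)(m-2)$, $\theta_\D=2-m$, $\tau_\D=0$. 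For $L(K_{m,m})$ with $(m^2,2m-2,m-2,2)$ one gets $\lambda-\mu=m-4$, $k-\mu=2m-4$, hence $\Delta=m^2$, giving $\theta=m-2$ and $\tau=-2$; thus $\rho_\D=2m(m-1)$, $\theta_\D=-m$, $\tau_\D=0$.

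It remains to supply the multiplicities, which I would obtain by substituting the parameters into $m_\tau-m_\theta=\frac{2k+(n-1)(\lambda-\mu)}{\sqrt{\Delta}}$ from \eqref{eq0} together with $m_\theta+m_\tau=n-1$; equivalently one may simply cite the well-known adjacency spectra of the cocktail party, triangular, and rook's graphs. This yields $(m_\theta,m_\tau)=(m,\,m-1)$ for $CP(m)$, $(m-1,\,\tfrac{m(m-3)}2)$ for $L(K_m)$, and $(2(m-1),\,(m-1)^2)$ for $L(K_{m,m})$. Transferring these multiplicities to the distance eigenvalues $\theta_\D$ (negative) and $\tau_\D=0$ produces precisely the three listed distance spectra, and the arithmetic check that the multiplicities sum to $n$ confirms the count in each case.

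No step presents a genuine difficulty, since the result is a verification of three explicit parameter sets; the only mild care needed is confirming that $\Delta$ is a perfect square in each family (so the eigenvalues are rational and $\tau=-2$ on the nose) and that the multiplicity formula simplifies cleanly, both of which hold because of the fortunate cancellations built into these parameter sets.
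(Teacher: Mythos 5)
Your proposal is correct and follows essentially the same route as the paper: the Observation is stated without explicit proof precisely because it is a direct substitution of each parameter set into the strongly regular distance-eigenvalue formulas ($\rho_{\D}=2(n-1)-k$, $\theta_{\D}=-\theta-2$, $\tau_{\D}=-\tau-2$ with multiplicities $m_\theta$, $m_\tau$) that the paper derives from Remark~\ref{diam2} at the start of Section~\ref{sSRG}, which is exactly what you do. All of your computed values of $\Delta$, $\tau=-2$, the distance eigenvalues, and the multiplicities check out against the three listed spectra.
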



\section{Distance regular graphs having one positive distance eigenvalue}\label{sdistreg}

Let  $i, j, k\ge 0$ be  integers. The graph $G =(V, E)$ is called {\em distance regular} if for any choice of $u, v\in V$ with $d(u, v) =k$, the number of vertices $w\in V$ such that $d(u, w) =i$ and $ d(v, w) =j$ is independent of the choice of $u$ and $v$. Distance spectra of several families of distance regular graphs were determined in \cite{AP15}.  In this section we complete the determination of the distance spectra of  all distance regular graphs having exactly one positive distance eigenvalue, as listed in  \cite[Theorem 1]{KS1994}. For individual graphs, it is simply a matter of computation, but for infinite families the determination is more challenging.  The infinite families in \cite[Theorem 1]{KS1994} are (with numbering from that paper):
 \ben
\item[(I)] cocktail party graphs $CP(m)$,
\item[(X)] cycles $C_n$ (called polygons in \cite{KS1994}), 
\item[(VII)] Hamming graphs $H(d,n)$,
\item[(VIII)] Doob graphs $D(d,n)$,
\item[(V)] Johnson graphs $J(n,r)$,
\item[(XI)] double odd graphs $DO(r)$, and
\item[(IV)] halved cubes $\frac{1}{2}Q_d$.
\een

First we summarize  the known distance spectra of  these infinite families.  
For a strongly regular graph, it is easy to determine the distance spectrum and we have listed the distance spectra of cocktail party graphs 
in Observation \ref{SRG1pos}.  The distance spectra of cycles  are determined in \cite{FCH01}, and in \cite{AP15} are presented in the following form:
For   odd  $n=2p+1$, ${ \dspec(C_n)}=\left\{\frac{n^2-1}4, (-\frac 1 4\sec^2(\frac{\pi j}n))^{(2)},j=1,\dots, p\right\}$, and for even $n=2p$ the distance eigenvalues are $\frac{n^2}4, 0^{(p-1)},(-\csc^2(\frac{\pi(2j-1)}n))^{(2)},j=1,\dots, \frac p 2$ and  $-1$ if $p$ is odd. 
Hamming graphs, whose distance spectra are determined in \cite{I2009},  are discussed in Section \ref{ssdoob} (since they are used to construct Doob graphs).
Johnson graphs, whose spectra are determined in \cite{AP15}, are  discussed in Section \ref{ssdodd} (because they are used to determine distance spectra of double odd graphs).  The remaining families are defined and their distance spectra determined in Section \ref{ssdoob} for Doob graphs, Section \ref{ssdodd} for  double odd graphs, and Section \ref{sshcube} for halved cubes.    Thus  all the infinite families  in \cite[Theorem 1]{KS1994} now have their distance spectra determined.

For completeness we list the distance spectra (some of which are known) for the individual graphs having exactly one positive distance eigenvalue; these are easily computed and we provide computational files in {\em Sage} \cite{code}.  Definitions of these graphs can be found in \cite{AH14}.

\begin{prop} The graphs listed in {\rm \cite[Theorem 1]{KS1994}} that have one positive distance eigenvalue and are not  in one of the infinite families, together with their distance  spectra, are:
 \ben
\item[(II)] the Gosset graph; $\{84, 0^{(48)}, (-12)^{(7)}\}$,
\item[(III)] the Schl\"afli graph; $\{36, 0^{(20)}, (-6)^{(6)}\}$,
\item[(VI)] the three Chang graphs (all the same spectra); $\{42, 0^{(20)}, (-6)^{(7)}\}$,
\item[(IX)] the icosahedral graph; $\left\{18,0^{(5)},(-3+ \sqrt{5})^{(3)},(-3- \sqrt{5})^{(3)}\right\}$,
\item[(XII)] the Petersen graph; $\{15, 0^{(4)}, (-3)^{(5)}\}$,
\item[(XIII)] the dodecahedral graph; $\left\{50,0^{(9)},{(-7+3 \sqrt{5})^{(3)},(-2)^{(4)},} \, (-7-3 \sqrt{5})^{(3)}\right\}$.
\een
\end{prop}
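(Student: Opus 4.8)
The plan is to split the six graphs into two groups according to diameter. The Schl\"afli graph, the three Chang graphs, and the Petersen graph are strongly regular (hence of diameter at most $2$), while the Gosset, icosahedral, and dodecahedral graphs are distance regular of diameter $3$, $3$, and $5$ respectively; the two groups call for different tools.

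For the diameter-$2$ strongly regular graphs I would invoke Remark \ref{diam2}: since each is regular of diameter at most $2$, $\D(G)=2(J-I)-\A(G)$, so the distance spectrum is obtained from the adjacency spectrum by sending $\rho=k$ to $2n-2-k$ and every other adjacency eigenvalue $\lambda$ to $-\lambda-2$, with multiplicities preserved. The adjacency spectra here are determined entirely by the parameters $(n,k,\lambda,\mu)$ through the formulas for $\theta,\tau,m_\theta,m_\tau$ recalled before Theorem \ref{SRGthm}, so one simply substitutes $(27,16,10,8)$ for Schl\"afli, $(28,12,6,4)$ for each Chang graph, and $(10,3,0,1)$ for Petersen. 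In particular, because those formulas depend only on the parameters (cf.\ Corollary \ref{SRGparam}), the three Chang graphs share an adjacency spectrum and therefore share a distance spectrum, which explains the parenthetical ``all the same spectra.''

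For the distance-regular graphs of diameter $d\ge 3$ I would use that the distance-$i$ matrices $A_0=I,\,A_1=\A(G),\dots,A_d$ are each polynomials in $\A(G)$, so $\D(G)=\sum_{i=0}^d i\,A_i$ is also a polynomial in $\A(G)$. Consequently $\D(G)$ and $\A(G)$ are simultaneously diagonalizable, and each adjacency eigenvalue $\theta_j$ of multiplicity $m_j$ contributes the single distance eigenvalue $\delta_j=\sum_{i=0}^d i\,u_i(\theta_j)$ of the same multiplicity $m_j$. Here $u_i(\theta)$ is the scalar by which $A_i$ acts on the $\theta$-eigenspace, computed from the intersection array $\{b_0,\dots,b_{d-1};\,c_1,\dots,c_d\}$ via $u_0=1$, $u_1=\theta$, and the three-term recurrence $\theta u_i=b_{i-1}u_{i-1}+a_iu_i+c_{i+1}u_{i+1}$ with $a_i=k-b_i-c_i$. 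Feeding in the known intersection arrays $\{27,10,1;1,10,27\}$ (Gosset), $\{5,2,1;1,2,5\}$ (icosahedron), and $\{3,2,1,1,1;1,1,1,2,3\}$ (dodecahedron) together with the known adjacency spectra then yields each $\delta_j$ and its multiplicity.

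I expect no genuine conceptual obstacle: once the structural reduction ``$\D(G)$ is a polynomial in $\A(G)$'' is in place, everything reduces to a finite recurrence computation for each graph, which can be checked by hand or with the accompanying \emph{Sage} files \cite{code}. The only points requiring a little care are the bookkeeping of multiplicities in the distance-regular case, where several distinct adjacency eigenvalues can collapse to a single distance eigenvalue (for instance, for the dodecahedron the adjacency eigenvalues $1$ and $-2$ both map to distance eigenvalue $0$, giving multiplicity $5+4=9$), and the observation, used above, that the distance spectrum of a regular graph of diameter at most $2$ is a function of its adjacency spectrum alone.
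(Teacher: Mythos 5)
Your proposal is correct, and every computation in it checks out (I verified the recurrences for the Gosset graph, the icosahedron, and the dodecahedron; your collapsing of the adjacency eigenvalues $1$ and $-2$ of the dodecahedron onto the distance eigenvalue $0$ with multiplicity $5+4=9$ is exactly right). However, it takes a genuinely different route from the paper. The paper offers no derivation at all for this proposition: it treats the six graphs as individual finite objects, states that the spectra ``are easily computed,'' and points to the accompanying \emph{Sage} files \cite{code}. Your argument replaces that computation with structure: the diameter-$2$ cases follow from Remark \ref{diam2} together with the strongly-regular-graph eigenvalue formulas (machinery the paper develops in Section \ref{sSRG} but never explicitly applies to this proposition), and the diameter-$\ge 3$ cases follow from writing $\D(G)=\sum_i i\,A_i$ in the Bose--Mesner algebra and running the three-term recurrence on the standard sequence $u_i(\theta)$ --- which is in fact the same idea the paper itself uses for Kneser graphs in Theorem \ref{thm:Kneser}, just applied here by hand with the intersection arrays instead of by machine. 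What your approach buys: a verifiable pencil-and-paper derivation, a uniform treatment of all six graphs, and a structural explanation of the parenthetical ``all the same spectra'' for the three Chang graphs (cospectral because the adjacency spectrum, hence the distance spectrum, of a strongly regular graph of diameter $2$ depends only on $(n,k,\lambda,\mu)$). What the paper's approach buys: brevity, and no need to quote adjacency spectra or intersection arrays as known inputs. One small caveat on self-containedness: you take the adjacency spectra of the Gosset, icosahedral, and dodecahedral graphs as given; if you want the proof fully closed, note that these too come from the intersection array (as eigenvalues of the tridiagonal intersection matrix, with multiplicities from the standard formula), so no external spectral data is actually needed.
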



A graph $G$ is {\em transmission regular} if $\D(G)\bone=\rho\bone$ (where $\rho$ is the constant row sum of $\D(G)$). 
 Any distance regular graph is transmission regular.    Here we present some tools for transmission regular graphs and matrices constructed from distance matrices of transmission regular graphs.   We first define the {\em cartesian product} of two graphs: For graphs $G=(V,E)$ and $G'=(V',E')$ define the graph $G\cp G'$ to be the graph  whose vertex set is the cartesian product $V\x V'$ and where two vertices $(u,u')$ and $(v,v')$ are adjacent if ($u=v$ and $\{u',v'\}\in {E'}$) or ($u'=v'$ and $\{u,v\}\in E$). 
The next theorem 
is stated for distance regular graphs in \cite{I2009}, but as noted in \cite{AP15}  the proof applies to transmission regular graphs. 
\begin{thm}\label{tspectrcprod}{\rm \cite[Theorem 2.1]{I2009}} Let $G$ and $G'$ be transmission regular graphs with $\dspec(G)=\{{\rho, \theta_2},\dots,\theta_n\}$ and $\dspec(G')=\{{\rho', \theta'_2},\dots,\theta'_{n'}\}$.  Then \[\dspec(G \cp G')=\{ n' {\rho}+n {\rho'}\}\cup\{ n' \theta_2,\dots,n'\theta_n\}\cup\{ n \theta'_2,\dots,n\theta'_{n'}\} \cup \{0^{((n-1)(n'-1))}\}.\] \end{thm}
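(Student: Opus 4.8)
The plan is to realize the distance matrix of the cartesian product explicitly in terms of $\D(G)$, $\D(G')$, and all-ones matrices, and then to diagonalize it using tensor products of eigenvectors of the two factors. The starting point is that distances add across a cartesian product: for vertices $(u,u')$ and $(v,v')$ of $G\cp G'$ one has $d_{G\cp G'}((u,u'),(v,v'))=d_G(u,v)+d_{G'}(u',v')$. This holds because each edge of $G\cp G'$ changes exactly one coordinate, so a path decomposes into moves in the first factor and moves in the second; the former must number at least $d_G(u,v)$ and the latter at least $d_{G'}(u',v')$, and concatenating a shortest $u$--$v$ path with a shortest $u'$--$v'$ path attains this sum. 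Ordering the $nn'$ vertices lexicographically, this distance formula becomes the matrix identity
\[
\D(G\cp G')=\D(G)\otimes J_{n'}+J_n\otimes\D(G'),
\]
where $J_n$ and $J_{n'}$ are the all-ones matrices of the indicated sizes and $\otimes$ denotes the Kronecker product; the first summand records the $G$-distance and the second the $G'$-distance.

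Next I would exploit transmission regularity to produce eigenvectors. Since $\D(G)\bone=\rho\bone$ and $\D(G)$ is symmetric, choose an orthogonal eigenbasis $\bone,\bv_2,\dots,\bv_n$ of $\R^n$ with $\D(G)\bv_i=\theta_i\bv_i$ and $\bv_i\perp\bone$; likewise choose $\bone,\bw_2,\dots,\bw_{n'}$ with $\D(G')\bw_j=\theta'_j\bw_j$ and $\bw_j\perp\bone$. The crucial point is that orthogonality to $\bone$ gives $J_n\bv_i=\0$ and $J_{n'}\bw_j=\0$, while $J_n\bone=n\bone$ and $J_{n'}\bone=n'\bone$. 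I would then test, as candidate eigenvectors, the $nn'$ tensor products $\bx\otimes\by$ with $\bx\in\{\bone,\bv_2,\dots,\bv_n\}$ and $\by\in\{\bone,\bw_2,\dots,\bw_{n'}\}$, using the mixed-product rule $(A\otimes B)(\bx\otimes\by)=(A\bx)\otimes(B\by)$.

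Applying this rule to each of the four families is routine. For $\bone\otimes\bone$ both summands act as scalars, giving eigenvalue $n'\rho+n\rho'$; for $\bv_i\otimes\bone$ the second summand vanishes (because $J_n\bv_i=\0$), leaving eigenvalue $n'\theta_i$; symmetrically $\bone\otimes\bw_j$ yields $n\theta'_j$; and for $\bv_i\otimes\bw_j$ both summands vanish, giving eigenvalue $0$ with multiplicity $(n-1)(n'-1)$. Because the two chosen eigenbases are orthogonal, their tensor products form an orthogonal — hence linearly independent — basis of $\R^{nn'}$, so these $1+(n-1)+(n'-1)+(n-1)(n'-1)=nn'$ vectors account for the entire spectrum, producing exactly the claimed multiset.

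I do not anticipate a serious obstacle: the only step requiring genuine care is the distance-additivity formula, since it is the sole place where the combinatorial structure of the cartesian product enters; once the matrix identity is in hand, transmission regularity does all the work by forcing the cross terms $J_n\bv_i$ and $J_{n'}\bw_j$ to vanish. One should confirm that the four eigenvector families do not silently overlap — for instance that $\bv_i\otimes\bone$ is orthogonal to $\bone\otimes\bone$ — but this is immediate from $\bv_i\perp\bone$, so the count above is genuine.
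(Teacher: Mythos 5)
Your proof is correct. One thing to note: the paper never proves this theorem itself --- it is imported from \cite{I2009}, with the remark (credited to \cite{AP15}) that the original proof for distance regular graphs goes through verbatim for transmission regular graphs --- so there is no in-paper proof to compare against line by line. Your argument is, however, precisely the mechanism the paper does use to prove its own Lemma \ref{transregblock}: transmission regularity makes $\bone$ a common eigenvector of the distance matrix and $J$, symmetry forces every remaining eigenvector $\bx$ to satisfy $J\bx=\0$, and that annihilation is what produces the spectrum. The difference is purely organizational: the paper's lemma handles a $2\times 2$ block matrix by hand, testing the vectors $\mtx{\bone\\ \bone}$, $\mtx{\bone\\ -\bone}$, $\mtx{\bx_i\\ \bx_i}$, $\mtx{\bx_i\\ -\bx_i}$, whereas you encode the Cartesian product structure once and for all in the Kronecker identity $\D(G\cp G')=\D(G)\otimes J_{n'}+J_n\otimes \D(G')$ and then let the mixed-product rule do the casework. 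Your formulation buys a cleaner completeness argument --- the $nn'$ tensor vectors are pairwise orthogonal, so no eigenvalue can be missed and all multiplicities are certified --- while the paper's block version generalizes in a different direction (it allows arbitrary coefficients $a_e,a_o,b_e,b_o,c_e,c_o$, which is what Theorem \ref{thm:dodd} on double odd graphs needs). Both of your supporting steps, the additivity of distances over $\cp$ and the extendability of $\bone$ to an orthogonal eigenbasis, are correctly justified; the latter implicitly uses that the graphs are connected (so $\D$ is well defined and irreducible), which the paper assumes throughout.
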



\begin{lem}\label{transregblock} Let $D$ be an $n\x n$ irreducible nonnegative symmetric matrix that commutes with $J$.  Suppose 
\[M=\mtx{a_e D +b_e J +c_e I & a_o D +b_o J +c_o I\\
a_o D +b_o J +c_o I & a_e D +b_e J +c_e I},\]
$a_e,a_o,b_e,b_o,c_e,c_o\in\R$, and $\spec(D)=\{\rho,\theta_2,\dots,\theta_{n}\}$ (where $\rho$ is the row sum so $D\bone=\rho\bone$).  Then
{\bea\spec(M)&=& \{(a_e+a_o)\rho + (b_e+b_o)n+ (c_e+c_o), (a_e-a_o)\rho + (b_e-b_o)n+ (c_e-c_o)\}\\
& &\cup\, \{(a_e+a_o)\theta_i + (c_e+c_o) : i=2,\dots,n\}\cup
\{(a_e-a_o)\theta_i + (c_e-c_o) : i=2,\dots,n\}, \eea}
where the union is a multiset union.  \end{lem}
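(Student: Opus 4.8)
The plan is to exploit the block symmetry
\[
M=\mtx{P & Q\\ Q & P},\qquad P:=a_e D+b_e J+c_e I,\quad Q:=a_o D+b_o J+c_o I,
\]
by splitting $\R^{2n}$ into the ``symmetric'' vectors $\plusx$ and the ``antisymmetric'' vectors $\negx$. A direct computation gives
\[
M\plusx=\mtx{(P+Q)\bx\\ (P+Q)\bx}\quad\text{and}\quad M\negx=\mtx{(P-Q)\bx\\ -(P-Q)\bx},
\]
so $\plusx$ is an eigenvector of $M$ for eigenvalue $\mu$ whenever $(P+Q)\bx=\mu\bx$, and $\negx$ is an eigenvector for eigenvalue $\nu$ whenever $(P-Q)\bx=\nu\bx$. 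Hence every eigenvalue of $P+Q$ and of $P-Q$ produces an eigenvalue of $M$, and the goal reduces to showing $\spec(M)$ is the multiset union $\spec(P+Q)\cup\spec(P-Q)$.

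Next I would diagonalize $P\pm Q$. Since $D$ commutes with $J$, the matrices $P+Q=(a_e+a_o)D+(b_e+b_o)J+(c_e+c_o)I$ and $P-Q=(a_e-a_o)D+(b_e-b_o)J+(c_e-c_o)I$ are polynomials in the two commuting symmetric matrices $D$ and $J$, and so are diagonalized by a common orthonormal eigenbasis of $D$. Invoking Remark \ref{diam2}, I would take this basis to be $\{\tfrac{1}{\sqrt n}\bone,\bu_2,\dots,\bu_n\}$, where $D\bone=\rho\bone$, $J\bone=n\bone$, and for $i\ge 2$ we have $D\bu_i=\theta_i\bu_i$ together with $J\bu_i=\0$ (the latter because $\bu_i\perp\bone$). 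Applying $P\pm Q$ to these basis vectors reads off the eigenvalues: on $\bone$ one gets $(a_e\pm a_o)\rho+(b_e\pm b_o)n+(c_e\pm c_o)$, and on each $\bu_i$ one gets $(a_e\pm a_o)\theta_i+(c_e\pm c_o)$, with the $J$-term dropping out. Collecting the ``$+$'' values gives $\spec(P+Q)$ and the ``$-$'' values gives $\spec(P-Q)$, which is exactly the claimed multiset.

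The computation has no serious obstacle; the only point needing care is the final bookkeeping, namely promoting the inclusion above to an equality of multisets. For this I would observe that the $2n$ vectors $\plusx,\negx$ arising as $\bx$ ranges over $\{\tfrac{1}{\sqrt n}\bone,\bu_2,\dots,\bu_n\}$ are mutually orthogonal: the symmetric and antisymmetric families are orthogonal to one another because $\langle\bu_i,\bu_j\rangle-\langle\bu_i,\bu_j\rangle=0$, while within each family orthogonality is inherited from the $\bu$'s. They therefore form an orthogonal basis of $\R^{2n}$, so no eigenvalue of $M$ is omitted and all multiplicities are counted correctly. I would also note that irreducibility and nonnegativity of $D$ are not actually needed for the spectral formula: they serve only to make $\rho$ the simple Perron root, whereas the argument uses only that $D$ commutes with $J$, i.e.\ the constant-row-sum structure of Remark \ref{diam2}.
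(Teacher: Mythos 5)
Your proposal is correct and follows essentially the same route as the paper: both build eigenvectors of $M$ of the forms $\mtx{\bx\\ \bx}$ and $\mtx{\bx\\ -\bx}$ from the eigenbasis $\{\bone,\bx_2,\dots,\bx_n\}$ of $D$, using that $J\bx_i=0$ for $i\ge 2$. Your write-up is somewhat more explicit than the paper's on the completeness bookkeeping (checking that the $2n$ constructed vectors form an orthogonal basis of $\R^{2n}$) and on the observation that irreducibility and nonnegativity are not needed for the formula itself, both of which are accurate refinements rather than deviations.
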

\bpf   Since $D$ commutes with $J$, $\bone$ is an eigenvector of $D$ for eigenvalue $\rho$.  Thus $\mtx{\bone\\\bone}$ is an eigenvector for $(a_e+a_o)\rho + (b_e+b_o)n+ (c_e+c_o)$ and $\mtx{\bone\\-\bone}$ is an eigenvector for $(a_e-a_o)\rho + (b_e-b_o)n+ (c_e-c_o)$.  Let $\bx_i$ be an eigenvector for $\theta_i$ (and assume $\{\bx_2,\dots,\bx_{n}\}$ is linearly independent). 
Since eigenvectors for distinct eigenvalues are orthogonal,  $\bx_i\perp\bone$, and thus $J\bx_i=0$.  Define $\by_i=\mtx{\bx_i\\\bx_i}$ and $\bz_i=\mtx{\bx_i\\-\bx_i}$.  Then $\by_i$ is an eigenvector of $M$ for eigenvalue $(a_e+a_o)\theta_i + (c_e+c_o)$ and $\bz_i$ is an eigenvector of $M$ for eigenvalue $(a_e-a_o)\theta_i + (c_e-c_o)$.
\epf


\subsection{Hamming graphs and Doob graphs}\label{ssdoob}

A Doob graph $D(m, d)$ is the cartesian product of $m$ copies of the Shrikhande graph and the Hamming graph $H(d, 4)$.
The Shrikhande graph is the graph $S=(V,E)$ where $V=\{0, 1, 2, 3\} \x\{0, 1, 2, 3\}$ and $E=\{\{(a, b),(c, d)\}:  (a, b) -(c, d) \in\{\pm(0, 1), \pm(1, 0), \pm(1, 1)\}\}$. The distance spectrum is ${\dspec(S)}=\left\{24, 0^{(9)}, (-4)^{(6)}\right\}$ {\cite{code}}.

\begin{figure}[h!]
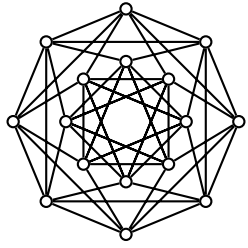

\begin{center}
\picCCC
\caption{The Shrikhande graph $S$\label{figShrik}}
\end{center}
\end{figure}

For $n\ge 2$ and $d\ge 2$, the Hamming graph $H(d,n)$ has vertex set consisting of all $d$-tuples of elements taken from $\{0,\dots,n-1\}$, with two vertices adjacent if and only if they differ in exactly one coordinate; 
$H(d,n)$ is equal to $K_n\cp \cdots\cp K_n$ with $d$ copies of $K_n$.
 In  \cite{I2009} it is shown that  the distance spectrum of the Hamming graph $H(d,n)$ is  \beq{\dspec(H(d,n))}=\left\{dn^{d-1}(n-1), 0^{(n^{d}-d(n-1)-1)}, (-n^{d-1})^{(d(n-1))}\right\}.\label{HamDspec}\eeq   
Observe that the line graph $L(K_{n,n})$ is the Hamming graph $H(2,n)$.
\begin{thm}\label{thm:doob} The distance spectrum of the Doob graph $D(m, d)$ is 
\[
\left\{3(2m+d)4^{2m+d-1}, {0^{(4^{2m+d}-6m-3d-1)},}\, (-4^{2m+d-1})^{(6m+3d)}\right\}.
\] 
\end{thm}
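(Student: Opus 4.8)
The plan is to reduce the Doob graph to a Hamming graph by exploiting that its non-Hamming building block is distance-cospectral with a Hamming factor, and then read off the answer from the known spectrum \eqref{HamDspec}. Recall $D(m,d) = S^{\cp m} \cp H(d,4)$, where $S$ is the Shrikhande graph. The crucial observation is that $S$ and the Hamming graph $H(2,4) = L(K_{4,4})$ have the \emph{same order} ($16$) and the \emph{same distance spectrum}: the excerpt records $\dspec(S) = \{24, 0^{(9)}, (-4)^{(6)}\}$, while Observation \ref{SRG1pos}(3) with $m=4$ (equivalently \eqref{HamDspec} with $n=4$, $d=2$) gives $\dspec(H(2,4)) = \{24, 0^{(9)}, (-4)^{(6)}\}$. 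Thus $S$ and $H(2,4)$ are interchangeable from the point of view of the product spectrum.

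Next I would invoke Theorem \ref{tspectrcprod}, which expresses $\dspec(G\cp G')$ purely in terms of the orders $n,n'$ and the distance spectra of $G$ and $G'$. To apply it iteratively to the full product I first note that cartesian products of transmission regular graphs are again transmission regular: the transmission (row sum of $\D$) of a vertex $(u,u')$ in $G\cp G'$ equals $n'$ times the transmission of $u$ in $G$ plus $n$ times the transmission of $u'$ in $G'$, which is constant when both factors are transmission regular. Consequently the distance spectrum of an iterated cartesian product depends only on the orders and distance spectra of its factors, so replacing each factor $S$ by $H(2,4)$ changes nothing:
\[
\dspec(D(m,d)) = \dspec\!\left(S^{\cp m}\cp H(d,4)\right) = \dspec\!\left(H(2,4)^{\cp m}\cp H(d,4)\right) = \dspec\!\left(H(2m+d,4)\right),
\]
where the last equality uses $H(a,4)\cp H(b,4) = H(a+b,4)$ (since $H(d,4)=K_4^{\cp d}$). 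Substituting $n=4$ and exponent $2m+d$ into \eqref{HamDspec} then yields spectral radius $3(2m+d)4^{2m+d-1}$, eigenvalue $-4^{2m+d-1}$ with multiplicity $3(2m+d)=6m+3d$, and eigenvalue $0$ with multiplicity $4^{2m+d}-3(2m+d)-1 = 4^{2m+d}-6m-3d-1$, exactly as claimed.

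I expect no serious obstacle here; the only point needing care is the justification that the distance spectrum of an iterated product is determined by the orders and distance spectra of the factors, which rests on transmission regularity being preserved under the product. As a fully self-contained alternative that avoids the ``swap'', I would instead argue by a direct induction on the number of factors using Theorem \ref{tspectrcprod}: prove that the cartesian product of a transmission regular graph on $4^a$ vertices with distance spectrum $\{3a\,4^{a-1},\, 0^{(4^a-3a-1)},\, (-4^{a-1})^{(3a)}\}$ and one of the same type with parameter $b$ is again of this type with parameter $a+b$. Since $S$ is the $a=2$ instance and $H(d,4)$ the $a=d$ instance, $D(m,d)$ is the $a=2m+d$ instance. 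The bookkeeping closing the induction is routine: the negative eigenvalue $-4^{a+b-1}$ accumulates multiplicity $3a+3b=3(a+b)$, and the zero eigenvalue accumulates multiplicity $(4^a-3a-1)+(4^b-3b-1)+(4^a-1)(4^b-1) = 4^{a+b}-3(a+b)-1$, while the new spectral radius is $4^b\cdot 3a\,4^{a-1}+4^a\cdot 3b\,4^{b-1} = 3(a+b)4^{a+b-1}$.
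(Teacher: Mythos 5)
Your proof is correct, and your primary route is genuinely different from the paper's. The paper proceeds exactly along the lines of your ``self-contained alternative'': it sets $S_m = S \cp \cdots \cp S$, proves $\dspec(S_m)=\left\{6m4^{2m-1}, 0^{(16^{m}-6m-1)}, (-4^{2m-1})^{(6m)}\right\}$ by induction on $m$ via Theorem \ref{tspectrcprod}, and then applies that theorem once more to $S_m \cp H(d,4)$ together with \eqref{HamDspec}; so your fallback induction is essentially the published argument, merely organized as a single induction on the type parameter $a$ rather than an induction on $m$ followed by one final product. Your main argument --- observing that $S$ and $H(2,4)=L(K_{4,4})$ are distance-cospectral graphs of the same order, that cartesian products of transmission regular graphs are again transmission regular, and that Theorem \ref{tspectrcprod} therefore makes the spectrum of an iterated product depend only on the orders and distance spectra of the factors, whence $\dspec(D(m,d))=\dspec(H(2m+d,4))$ --- is a cleaner conceptual reduction that avoids recomputing anything: the answer is literally \eqref{HamDspec} with exponent $2m+d$. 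What the swap buys is brevity and an explanation of \emph{why} the Doob spectrum coincides with a Hamming spectrum (it is the distance analogue of the classical fact that the Shrikhande graph and $H(2,4)$ are cospectral strongly regular graphs with the same parameters); what the paper's computation buys is the intermediate spectrum \eqref{ShriDspec} of $S_m$, recorded explicitly, with no need for the meta-statement about iterated products. Note also that your explicit verification that transmission regularity is preserved under $\cp$ is a hypothesis the paper leaves implicit when it applies Theorem \ref{tspectrcprod} to $S_{m-1}\cp S$, so your write-up is, if anything, more careful on that step.
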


{\begin{proof}
Let $S_m$ be the cartesian product of $m$ copies of the Shrikhande graph. Then $\dspec(D(m, d))=\dspec(S_m \cp H(d,4))$. 
First we show that 
\beq\dspec(S_m)=\left\{6m4^{2m-1}, 0^{(16^{m}-6m-1)}, (-4^{2m-1})^{(6m)}\right\}\label{ShriDspec}\eeq
 by  induction on $m$. It is clear  that the result holds for $m=1$. Assume the result holds for $m-1$. Then by induction hypothesis, 
{$\dspec(S_{m-1})=\{6(m-1)4^{2m-3}, 0^{(16^{m-1}-6m+5)},$ $(-4^{2m-3})^{(6m-6)}\}$.}
By Theorem \ref{tspectrcprod}, we have

{\bea 
\dspec(S_{m})
&=&\dspec(S_{m-1} \cp S) \\
&=&\left\{ 16\cdot 6(m-1)4^{2m-3} + 16^{m-1}\cdot 24\right\}\\
& & \cup\ \left\{  (16\cdot0)^{(16^{m-1}-6m+5)}, (-16\cdot4^{2m-3})^{(6m-6)}\right \}\\
& &\cup\ \left\{  (16^{m-1}\cdot0)^{(9)}, (-16^{m-1}\cdot 4)^{(6)}\right\} \cup \left\{0^{(16^{m-1}-1)(16-1)}\right\}\\
&=&\left\{6m4^{2m-1}, 0^{(16^{m}-6m-1)}, (-4^{2m-1})^{(6m)}\right\}.
\eea}

From \eqref{HamDspec}, $\dspec(H(d,4))=\left\{3d4^{d-1}, 0^{(4^{d}-3d-1)}, (-4^{d-1})^{(3d)}\right\}$.
Then by \eqref{ShriDspec} and Theorem \ref{tspectrcprod}, we have
\bea 
\dspec(D(m, d))
&=&\dspec(S_m \cp H(d,4)) \\
&=&\left\{ 4^d6m4^{2m-1} + 16^{m}3d4^{d-1}\}\cup\{  (4^d \cdot 0)^{(16^{m}-6m-1)}, (-4^d4^{2m-1})^{(6m)}\right\}\\
&\ &\cup\left\{(16^m\cdot0)^{(4^{d}-3d-1)}, (-16^m4^{d-1})^{(3d)} \right\} \cup \left\{0^{(16^{m}-1)(4^d-1)}\right\}\\
&=&\left\{3(2m+d)4^{2m+d-1}, 0^{(4^{2m+d}-6m-3d-1)}, (-4^{2m+d-1})^{(6m+3d)}\right\}.
\qedhere\eea
\end{proof}}

\subsection{Johnson, Kneser,  and double odd graphs}\label{ssdodd}

Before defining Johnson graphs, we consider a more general family that includes both Johnson and Kneser graphs. For fixed $n$ and $r$, let $[n]:=\{1,\dots,n\}$ and ${[n]\choose r}$ denote the collection of all $r$-subsets of  $[n]$.    For fixed $n,r,i$, the  graph $J(n;r;i)$ is the graph defined on vertex set ${[n]\choose r}$ such that two vertices $S$ and $T$ are adjacent if and only if $|S\cap T|=r-i$.\footnote{Note the definition of  this  family of  graphs varies with the source.  Here we follow \cite{BT1984}, whereas in \cite{GR01} the  graph defined by $n, r$, and $i$ is what is here denoted by $J(n;r;r-i)$.} The Johnson graphs $J(n,r)$  are the graphs $J(n;r;1)$, and they are distance regular.  {Observe that the line graph $L(K_n)$ is the Johnson graph $J(n,2)$.  
The   distance spectrum of Johnson graphs $J(n,r)$ are determined in \cite[Theorem 3.6]{AP15}:
   \beq
{\dspec(J(n,{r}))} = \left\{s(n,r), 0^{\left({n\choose r}-n\right)},\left(-\frac {s(n,r)}{n-1}\right)^{(n-1)}\right\}\label{eq:Johnson}\eeq
where $s(n,r) =\sum _{j=0}^rj{r\choose j}{{n-r}\choose j}$.

Although they do not necessarily have one distance  positive eigenvalue and are not all distance regular,  Kneser graphs can be used to construct double odd graphs. The Kneser graph $K(n,r)=J(n;r;r)$ is the graph on the vertex set ${[n]\choose r}$ such that two vertices $S$ and $T$ are adjacent if and only if $S \cap T=\emptyset$.  Of particular interest are the {\em odd graphs} $O(r)=K(2r+1,r)$.

A double odd graph $DO(r)$ is a graph whose vertices are $r$-element or $(r+1)$-element subsets of  $[2r+1]$, where two vertices $S$ and $T$ are adjacent if and only if $S \subset T$ or $T \subset S$, as subsets.  Double odd graphs can also be constructed as tensor products of odd graphs.  We first define the {\em tensor product} of two graphs: For graphs $G=(V,E)$ and $G'=(V',E')$ define the graph $G\x G'$ to be the graph  whose vertex set is the cartesian product $V\x V'$ and where two vertices $(u,u')$ and $(v,v')$ are adjacent if $\{u,v\}\in E$ and $\{u',v'\}\in {E'}$.    To see that $DO(r)=O(r)\x P_2$, observe that $O(r)\x P_2$ has as vertices two copies of  the vertices of $O(r)=K(2r+1,r)$, call them $(S_k,1)$ and $(S_k,2)$.  Then there are no edges just between the {vertices of the form} $(S_k,1)$, no edges just between the $(S_k,2)$, and $(S_k,1)\sim (S_j,2)$ if and only if $S_k\cap S_j=\emptyset$.  Equivalently, $(S_k,1)\sim (S_j,2)$ if and only if $S_k\subset \OL{S_j}$.  We will work with the representation of $DO(r)$ as $O(r)\x P_2$.

\begin{rem}\label{Dtensor}  Let $G$ be a graph that is not bipartite.  Then $G\x P_2$ is a connected bipartite graph and $\D(G\x P_2)$ has the form $\mtx{D_e & D_o\\{D_o} & D_e}$, where $D_e$ and  $D_o$ are $n\x n$ nonnegative symmetric matrices with the entries of $D_e$ even and those of $D_o$ odd; all of these statements are obvious except the symmetry of $D_o$.  
 Observe that $D_e$ is the matrix whose $i,j$-entry is the shortest even distance between vertices $v_i$ and $v_j$ in $G$, and $D_o$ is the matrix whose $i,j$-entry is the shortest odd distance between vertices $v_i$ and $v_j$ in $G$.
\end{rem}

It is known \cite{VPV2005} that the distance between two vertices $S$ and $T$ in $K(n,r)$ is given by the formula 
\[d_{K}(S,T)=\min\left\{2\left\lceil\frac{r-|S\cap T|}{n-2r}\right\rceil, 2\left\lceil \frac{|S\cap T|}{n-2r}\right\rceil +1\right\}, \]
which for {the odd graph $O(r)$} is
\beq d_{O}(S,T)=\min\{2(r-|S\cap T|), 2|S\cap T| +1\}. \label{f1eq}\eeq
The distance between two vertices $S$ and $T$ in the Johnson graph $J(n,r)$   is given by the formula 
 \beq d_{J(n,r)}(S,T)=\frac{1}{2}|S\Delta T|=r-|S\cap T|,\label{f2eq}\eeq
 where $\Delta$ is the symmetric difference.  Let $d_J(S,T)=d_{J(2r+1,r)}(S,T)$.

\begin{prop}\label{cor:De}
{In $O(r)$, $d_O(S,T)=2$ if and only if $d_J(S,T)=1$. Furthermore, for $\D(O(r)\x P_2)=\mtx{D_e & D_o\\{D_o} & D_e}$,  $D_e= 2\D(J(2r+1,r))$}. 
\end{prop}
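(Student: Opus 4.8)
I would prove the two assertions separately, using the first to drive the second.

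For the equivalence $d_O(S,T)=2 \Leftrightarrow d_J(S,T)=1$, I would argue by parity directly from the distance formulas. By \eqref{f2eq} the condition $d_J(S,T)=1$ means $|S\cap T|=r-1$, and substituting into \eqref{f1eq} gives $d_O(S,T)=\min\{2,\,2r-1\}=2$ (using $r\ge 2$). Conversely, the quantity $2|S\cap T|+1$ in \eqref{f1eq} is odd and so can never equal $2$; hence if $d_O(S,T)=2$, then the even term $2(r-|S\cap T|)$ must attain this minimum value, forcing $|S\cap T|=r-1$, i.e. $d_J(S,T)=1$. (This argument excludes the degenerate case $r=1$, where $O(1)=K_3$.)

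For the matrix identity, I would first recall from Remark \ref{Dtensor} that the $(S,T)$ entry of $D_e$ is the shortest even distance between $S$ and $T$ in $O(r)$, whereas the $(S,T)$ entry of $2\D(J(2r+1,r))$ is $2d_J(S,T)=2(r-|S\cap T|)$. So the goal reduces to showing that the shortest even distance in $O(r)$ equals $2(r-|S\cap T|)$, which I would establish by matching lower and upper bounds. The key reformulation, supplied by the first assertion, is that the graph on $\binom{[2r+1]}{r}$ joining pairs at distance exactly $2$ in $O(r)$ is precisely $J(2r+1,r)$.

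For the lower bound I would take any even walk $S=S_0,U_0,S_1,U_1,\dots,S_m=T$ of length $2m$ and read it as $m$ double-steps. In each double-step $S_i,U_i,S_{i+1}$, the vertices $S_i$ and $S_{i+1}$ are both neighbours of $U_i$, hence both are $r$-subsets of the $(r+1)$-element set $[2r+1]\setminus U_i$, which forces $|S_i\cap S_{i+1}|\ge r-1$; thus $S_i=S_{i+1}$, or else $|S_i\cap S_{i+1}|=r-1$ and $S_i,S_{i+1}$ are adjacent in $J(2r+1,r)$ by the first assertion. Consequently $S_0,\dots,S_m$ is a walk in $J(2r+1,r)$, so $m\ge d_J(S,T)$ and the even walk has length $\ge 2(r-|S\cap T|)$. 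For the upper bound I would lift a $J(2r+1,r)$-geodesic $S=S_0,S_1,\dots,S_\ell=T$ of length $\ell=r-|S\cap T|$: each consecutive pair is at distance $2$ in $O(r)$ by the first assertion, so replacing each Johnson edge by a length-$2$ path in $O(r)$ and concatenating yields an even walk of length $2\ell$. The bounds coincide, giving shortest even distance $2(r-|S\cap T|)=2d_J(S,T)$, and hence $D_e=2\D(J(2r+1,r))$. The one step I would watch most closely is the lower bound's claim that consecutive vertices of an even walk are equal or Johnson-adjacent; this hinges on the simple but essential observation that two neighbours of a common vertex in $O(r)$ both lie in an $(r+1)$-element complement and therefore overlap in at least $r-1$ elements. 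The rest is bookkeeping, with the standing assumption $r\ge 2$ needed so that $2r-1>2$.
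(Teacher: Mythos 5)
Your proof is correct and follows essentially the same route as the paper's: the first assertion is read off from the distance formulas \eqref{f1eq} and \eqref{f2eq}, and the matrix identity is obtained by translating even walks in $O(r)$ into walks in $J(2r+1,r)$ and, conversely, lifting Johnson geodesics to even walks of twice the length. Your version is in fact slightly more careful than the paper's, which asserts without justification that consecutive even-indexed vertices of a minimal even path are Johnson-adjacent; your observation that two neighbours of a common vertex are $r$-subsets of an $(r+1)$-set, hence equal or intersecting in exactly $r-1$ elements, supplies exactly the missing detail (and your use of walks rather than paths sidesteps a minor imprecision in the paper's phrasing).
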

\begin{proof}
{The first statement follows from equations \eqref{f1eq} and \eqref{f2eq} (it also follows  from the definition). To prove the second part, let $S_0,\ldots,S_{2n}$ be a path of minimum even length between $S_0$ and $S_{2n}$, then $S_0,S_2, S_4,\ldots,S_{2n}$ is a path of length $n$  in the Johnson graph by the first statement. Conversely any path of the length $i$ in the Johnson graph between $S_0$ and $S_i$ provides  a path of length $2i$ between $S_0$ and $S_i$ in the odd graph (note that the new vertices are pairwise distinct). This implies the second statement. }
\end{proof}

\begin{prop}\label{cor:Do} {For $\D(O(r)\x P_2)=\mtx{D_e & D_o\\{D_o} & D_e}$, 
$D_o=(2r+1)J-2{\D}(J(2r+1,r))$.} 
\end{prop}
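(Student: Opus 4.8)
The plan is to verify the identity entrywise. By Remark~\ref{Dtensor}, the $(S,T)$-entry of $D_o$ is the length of the shortest \emph{odd} walk from $S$ to $T$ in $O(r)$, while by \eqref{f2eq} the $(S,T)$-entry of $2\D(J(2r+1,r))$ is $2\,d_J(S,T)=2(r-|S\cap T|)$. Thus, writing $a=|S\cap T|$, the proposition is equivalent to the statement that the shortest odd distance between $S$ and $T$ in $O(r)$ equals $(2r+1)-2(r-a)=2a+1$. Combined with Proposition~\ref{cor:De} this also says $D_e+D_o=(2r+1)J$, i.e.\ the shortest even and shortest odd distances between any two vertices sum to $2r+1$.

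First I would reduce to a single optimization by splitting an odd walk at its first edge. Every odd walk from $S$ to $T$ begins with an edge $S\sim S'$ and continues with an even walk from $S'$ to $T$, and conversely; hence the shortest odd distance from $S$ to $T$ equals $1+\min_{S'\sim S}\big(\text{shortest even distance from }S'\text{ to }T\big)$. By Proposition~\ref{cor:De}, the shortest even distance in $O(r)$ between any two vertices $X$ and $Y$ is the $(X,Y)$-entry of $D_e=2\D(J(2r+1,r))$, which is $2(r-|X\cap Y|)$. Substituting gives shortest odd distance $=1+2\big(r-\max_{S'\sim S}|S'\cap T|\big)$, so everything comes down to computing $\max_{S'\sim S}|S'\cap T|$.

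Now $S'\sim S$ means $S'$ is an $r$-subset of $[2r+1]\setminus S$, a set of size $r+1$. Since $S'$ avoids $S$, we have $S'\cap T\subseteq T\setminus S$ and hence $|S'\cap T|\le|T\setminus S|=r-a$. This bound is achieved: as $|T\setminus S|=r-a\le r$ and $T\setminus S\subseteq[2r+1]\setminus S$ with $|[2r+1]\setminus S|=r+1\ge r$, the set $T\setminus S$ can be extended to an $r$-subset $S'\subseteq[2r+1]\setminus S$, for which $|S'\cap T|=r-a$. Therefore $\max_{S'\sim S}|S'\cap T|=r-a$, and the shortest odd distance is $1+2\big(r-(r-a)\big)=2a+1$, which yields $D_o=(2r+1)J-2\D(J(2r+1,r))$.

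The only step needing genuine care is the last one: confirming that a maximizing neighbor $S'$ actually exists, which rests on the arithmetic $|[2r+1]\setminus S|=r+1\ge r$ that lets $T\setminus S$ be completed to an $r$-set disjoint from $S$. The first-edge decomposition and the appeal to Proposition~\ref{cor:De} are routine. As an aside, the clean form $D_e+D_o=(2r+1)J$ reflects that $DO(r)=O(r)\times P_2$ is an antipodal graph of diameter $2r+1$ with $(T,1)$ and $(T,2)$ forming an antipodal pair; one could instead base the proof on that fact, but establishing the antipodality rigorously would require essentially the same combinatorial input.
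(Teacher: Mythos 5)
Your proof is correct and takes essentially the same route as the paper's: both reduce the shortest odd distance to $1$ plus a shortest even distance computed via Proposition~\ref{cor:De}, then maximize the intersection size over neighbors by extending the relevant set difference to an $r$-subset of the complement (possible since $|[2r+1]\setminus S|=r+1\ge r$). The only cosmetic difference is that you split the odd walk at its first edge (minimizing over neighbors of $S$) while the paper splits at the last edge (neighbors of $T$), which is the identical argument by symmetry.
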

\begin{proof}
{
Let $S$ and $T$ be two vertices of {the odd graph $O(r)$}. Let $\widetilde{T}$ be an $r$-subset of $[2r+1]\setminus T$ containing $S\setminus T$. Thus $S\cap \widetilde{T}$ has the maximum size among all $r$-subsets  of $[2r+1]\setminus T$. Now the minimum odd distance between $S$ and $T$ is one more than the minimum even distance between $S$ and a neighbor of $T$ in the Kneser graph {$O(r)$}. 
By Proposition \ref{cor:De}, this is $1+2d_J(S,W)=1+2(r-|S\cap W|)$, where $W$ is a neighbor of $T$ in {$O(r)$ that}  maximizes $|S\cap W|$. It suffices to set $W=\widetilde{T}$. This implies that 
the minimum odd distance between $S$ and $T$ is 
\[1+2d_J(S,{\widetilde T})=1+2(r-|S\cap \widetilde{T}|)=2r+1-2|S\cap \widetilde{T}|=2r+1-2(|S{\setminus}T|)\]
\[=2r+1-2(|S|-|S\cap T|)=2r+1-2(r-|S\cap T|)=2r+1-2d_J(S,T).\qedhere\]
}
\end{proof}

\begin{thm}\label{thm:dodd}  Let $D$ be the distance matrix of the Johnson graph  $J(2r+1,r)$, let $J$ be the all ones matrix of order ${{2r+1}\choose r}$, and let  $r \ge 2$. 
The distance matrix  of the double odd graph $DO(r)$ is
\[  \mtx{ D_e & D_o \\ D_o & D_e} = \mtx{ 2D & (2r+1)J-2D \\ (2r+1)J-2D & 2D } \]
and its  distance spectrum    is 
\[{\dspec(DO(r))}= \left\{(2r{+}1){{2r{+}1}\choose r},  0^{\left(2{2r{+}1\choose r}-{2r-2}\right)},\left(-\frac {2s(2r{+}1,r)}{r}\right)^{({2r})}\!\!, -(2r{+}1){{2r{+}1}\choose r}+4s(2r{+}1,r)\right\}.\]
\end{thm}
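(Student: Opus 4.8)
The plan is to read off the block structure of $\D(DO(r))$ from the two preceding propositions and then invoke Lemma \ref{transregblock} with $D = \D(J(2r+1,r))$, using the known Johnson spectrum \eqref{eq:Johnson}. Since $DO(r)=O(r)\x P_2$ and the odd graph $O(r)$ is not bipartite, Remark \ref{Dtensor} gives $\D(DO(r)) = \mtx{D_e & D_o \\ D_o & D_e}$. Propositions \ref{cor:De} and \ref{cor:Do} identify $D_e = 2D$ and $D_o = (2r+1)J - 2D$, where $D=\D(J(2r+1,r))$ and $J$ is the all ones matrix of order $N := \binom{2r+1}{r}$. This already establishes the matrix displayed in the statement, and it exhibits $\D(DO(r))$ in exactly the form required by Lemma \ref{transregblock}: the diagonal block is $D_e = 2D + 0\cdot J + 0\cdot I$ and the off-diagonal block is $D_o = -2D + (2r+1)J + 0\cdot I$, so $a_e = 2,\ b_e = c_e = 0$ and $a_o = -2,\ b_o = 2r+1,\ c_o = 0$.

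Next I would verify the hypotheses of the lemma for $D$. The Johnson graph $J(2r+1,r)$ is connected, so $D$ is irreducible, nonnegative, and symmetric; and because $J(2r+1,r)$ is distance regular (hence transmission regular) the matrix $D$ has constant row sum and therefore commutes with $J$ by Remark \ref{diam2}. Writing $s := s(2r+1,r)$, the spectrum of $D$ is given by \eqref{eq:Johnson} as $\rho = s$ with multiplicity $1$ (eigenvector $\bone$), together with $\theta_i = 0$ of multiplicity $N-(2r+1)$ and $\theta_i = -s/(2r)$ of multiplicity $2r$ (here $n-1 = 2r$).

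I would then substitute these data into the four families of eigenvalues produced by Lemma \ref{transregblock}. The two eigenvalues coming from $\rho$ are $(a_e+a_o)\rho + (b_e+b_o)N = (2r+1)N$ and $(a_e-a_o)\rho + (b_e-b_o)N = 4s - (2r+1)N$, which match the first and last entries of the claimed spectrum. For $i = 2,\dots,N$ the symmetric eigenvalues are $(a_e+a_o)\theta_i = 0$, contributing $0$ with multiplicity $N-1$, while the antisymmetric eigenvalues are $(a_e-a_o)\theta_i = 4\theta_i$, giving $0$ with multiplicity $N-(2r+1)$ (from $\theta_i = 0$) and $-2s/r$ with multiplicity $2r$ (from $\theta_i = -s/(2r)$). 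Collecting the zeros yields total multiplicity $(N-1) + (N-(2r+1)) = 2N - 2r - 2$, reproducing $\dspec(DO(r))$ exactly.

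I expect no genuine obstacle: once the block form is in place the computation is a direct, if slightly bookkeeping-heavy, application of Lemma \ref{transregblock}, with all the structural input already supplied by Propositions \ref{cor:De} and \ref{cor:Do}. The one point requiring care is that the eigenvalue $0$ arises from two independent sources—the symmetric eigenvectors attached to \emph{every} $\theta_i$, and the antisymmetric eigenvectors attached to the zero eigenvalues of $D$—so their multiplicities must be added; the running count to the full dimension $2N = 1 + 1 + (2N-2r-2) + 2r$ serves as a convenient final check.
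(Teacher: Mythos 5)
Your proposal is correct and follows essentially the same route as the paper: the block form $\mtx{2D & (2r{+}1)J-2D \\ (2r{+}1)J-2D & 2D}$ comes from Remark \ref{Dtensor} together with Propositions \ref{cor:De} and \ref{cor:Do}, the spectrum of $D$ from \eqref{eq:Johnson}, commutativity with $J$ from transmission regularity, and the spectrum then follows from Lemma \ref{transregblock}. The only difference is that you carry out the substitution into the lemma explicitly (with the useful observation that the zero eigenvalue arises from two sources whose multiplicities add), whereas the paper leaves this bookkeeping to the reader.
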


\bpf
 The first statement follows from  Remark \ref{Dtensor} and Propositions \ref{cor:De} and \ref{cor:Do}.
  From Equation {\eqref{eq:Johnson}}, 
$
\spec(D) = \left\{s(2r{+}1,r), 0^{\left({2r{+}1\choose r}-{(2r{+}1)}\right)},\left(-\frac {s(2r{+}1,r)}{2r}\right)^{({2r})}\right\}$
where $s(n,r) =\sum _{j=0}^rj{r\choose j}{{n{-}r}\choose j}$.  Since $J(2r{+}1,r)$ is distance regular, it is transmission regular, that is, $D$ commutes with $J$.  Then Lemma \ref{transregblock} {establishes the result.}
\epf

We now return to {arbitrary} Kneser graphs $K(n,r)$ and determine their distance spectra.
 Let $D$ be the distance matrix of $K(n,r)$.  Let $A_0$ be the identity matrix of {order}  ${n\choose r}$ and $A_i$ the adjacency matrix of $J(n;r;i)$ for $1\leq i\leq r$.  It follows that 
\[D=\sum_{i=0}^rf(i)A_i, \text{ where } f(i)=\min\left\{2\left\lceil\frac{i}{n-2r}\right\rceil, 2\left\lceil \frac{r-i}{n-2r}\right\rceil +1\right\}.\]

It is known that $\{A_i\}_{i=0}^r$ forms an \textit{association scheme} called the \textit{Johnson scheme} and the following properties come from {the Corollary  to Theorem 2.9 and from Theorem 2.10} {in} \cite{BT1984}. For more information about association schemes, see, e.g., Section 2.3 in \cite{BT1984}.  

\begin{thm}{\rm \cite{BT1984}}
\label{thm:Johnson}
The matrices $\{A_i\}_{i=0}^r$ form a {commuting} family and are simultaneously diagonalizable.  There are subspaces $\{V_j\}_{j=0}^r$ such that 
\begin{itemize}
\item the whole space is the direct sum of $\{V_j\}_{j=0}^r$;
\item for each $A_i$, 
\[p_i(j)=\sum_{t=0}^j(-1)^t{j\choose t}{r-j\choose i-t}{n-r-j\choose i-t}\]
is an eigenvalue with multiplicity $m_j=\frac{n-2j+1}{n-j+1}{n\choose j}$ whose eigenspace is $V_j$.
\end{itemize}
\end{thm}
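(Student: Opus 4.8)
The plan is to treat $\{A_i\}_{i=0}^r$ as the relation matrices of the Johnson association scheme and to read off its spectral data from the natural action of the symmetric group $S_n$ on $\binom{[n]}{r}$. First I would verify the association-scheme axioms: plainly $A_0=I$, $\sum_{i=0}^r A_i=J$, and each $A_i$ is symmetric, so the only substantive point is that the products $A_iA_j$ lie in the span of the $A_k$. This follows from a transitivity count. The group $S_n$ acts transitively on ordered pairs $(S,T)$ of $r$-sets with a fixed value of $|S\cap T|$, so for $|S\cap T|=r-k$ the number of $r$-sets $U$ with $|S\cap U|=r-i$ and $|T\cap U|=r-j$ depends only on $i,j,k$; these intersection numbers are exactly the structure constants in $A_iA_j=\sum_k p_{ij}^k A_k$. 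Since all relations are symmetric, the resulting Bose--Mesner algebra is commutative, which gives at once that the $A_i$ commute and, being a commuting family of real symmetric matrices, that they are simultaneously orthogonally diagonalizable. This establishes the first assertion together with the existence of common eigenspaces.

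Next I would identify the eigenspaces $V_j$ representation-theoretically. Each $A_i$ commutes with the permutation action of $S_n$ on $\R^{\binom{[n]}{r}}$, so every common eigenspace is an $S_n$-submodule. The classical fact that $(S_n,\,S_r\times S_{n-r})$ is a Gelfand pair says precisely that this permutation module is multiplicity-free, decomposing as $\bigoplus_{j=0}^{r} V_j$ with $V_j$ the irreducible indexed by the partition $(n-j,j)$. Multiplicity-freeness is the decisive point: by Schur's lemma each $A_i$ acts as a single scalar $p_i(j)$ on $V_j$, so the common eigenspaces are exactly the $V_j$, and the whole space is their direct sum. The multiplicity is then $m_j=\dim V_j=\binom{n}{j}-\binom{n}{j-1}$, which a one-line manipulation rewrites as $\frac{n-2j+1}{n-j+1}\binom{n}{j}$.

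It remains to pin down the scalar $p_i(j)$, and I expect this to be the main obstacle, since the soft steps above deliver the decomposition and the multiplicities almost for free while the exact polynomial requires genuine combinatorics. I would fix a reference $j$-set $R$ and construct an explicit vector in $V_j$ by inclusion--exclusion, namely a signed combination of the standard basis vectors $e_S$ weighted according to how $S$ meets $R$ and arranged to be orthogonal to the image of the raising map from $\R^{\binom{[n]}{j-1}}$. Applying $A_i$ and counting, for each $r$-set $S$, the $r$-sets $U$ with $|S\cap U|=r-i$ by their intersection pattern with $R$ yields a threefold summation whose sign-reversing cancellation collapses it to $\sum_{t}(-1)^t\binom{j}{t}\binom{r-j}{i-t}\binom{n-r-j}{i-t}$, the asserted Eberlein polynomial. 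An alternative to this direct evaluation is to exploit that the scheme is $P$-polynomial, so that $A_i$ is a polynomial of degree $i$ in the Johnson-graph matrix $A_1$; diagonalizing $A_1$ and solving the resulting recurrence reduces the claim to the same finite binomial identity, which can then be checked directly.
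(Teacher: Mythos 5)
A preliminary but important point: the paper does not prove this statement at all --- it is quoted from Bannai and Ito (the Corollary to Theorem 2.9 together with Theorem 2.10 of \cite{BT1984}) and used as a black box to compute Kneser distance spectra. So your proposal can only be measured against the standard literature proof, and your outline is indeed that proof. The $S_n$-transitivity count establishing that the $A_i$ span a commutative Bose--Mesner algebra is correct; the multiplicity-free decomposition $\R^{\binom{[n]}{r}}=\bigoplus_{j=0}^{r}V_j$ into the $S_n$-irreducibles indexed by the partitions $(n-j,j)$ is the right key fact (note it requires $r\le n/2$, a hypothesis the theorem itself is tacitly using); Schur's lemma then gives scalar action of each $A_i$ on each $V_j$ (legitimate over $\R$ because these irreducibles are rational, hence absolutely irreducible); and $\dim V_j=\binom{n}{j}-\binom{n}{j-1}=\frac{n-2j+1}{n-j+1}\binom{n}{j}$ gives the stated multiplicities. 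One step you leave tacit: to conclude the $V_j$ are \emph{exactly} the common eigenspaces you need the eigenvalue vectors $\bigl(p_i(j)\bigr)_{i}$ to differ for distinct $j$; this follows because the commutant of a multiplicity-free module has dimension $r+1$, hence equals the span of the (linearly independent) $A_i$, so the orthogonal projections onto the $V_j$ lie in that span.

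The genuine shortfall is the formula $p_i(j)=\sum_{t}(-1)^t\binom{j}{t}\binom{r-j}{i-t}\binom{n-r-j}{i-t}$ itself, which is the part of the theorem the paper actually consumes (Theorem \ref{thm:Kneser} sums precisely these numbers). You correctly flag this as the main obstacle, but the proposal then only gestures at it: ``a threefold summation whose sign-reversing cancellation collapses'' to the Eberlein polynomial is a description of the computation to be done, not the computation, and the alternative route (express $A_i$ as a polynomial in $A_1$ via the $P$-polynomial property and evaluate at the eigenvalue $(r-j)(n-r-j)-j$ of the Johnson graph on $V_j$) likewise terminates at ``a finite binomial identity which can then be checked.'' Either route can be completed --- for instance, take $j$ disjoint pairs $\{a_1,b_1\},\dots,\{a_j,b_j\}$, form the signed vector whose coefficient on an $r$-set $S$ is the product over $t$ of $+1$ if $a_t\in S,\ b_t\notin S$, of $-1$ if $b_t\in S,\ a_t\notin S$, and $0$ otherwise; verify it lies in $V_j$; and extract the eigenvalue by computing one well-chosen coordinate of its image under $A_i$ --- but as written your argument fully establishes only the structural assertions (commutativity, simultaneous diagonalizability, the direct-sum decomposition, and the multiplicities $m_j$), while the eigenvalue formula, the claim with real combinatorial content, remains a plan rather than a proof.
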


The value $p_i(j)=\sum_{t=0}^j(-1)^t{j\choose t}{r-j\choose i-t}{n-r-j\choose i-t}$ is known as the \textit{Eberlein polynomial}. Now with Theorem \ref{thm:Johnson} the distance spectrum of $K(n,r)$ can also be computed.

\begin{thm}
\label{thm:Kneser}
The distance spectrum of $K(n,r)$ consists of  
\[\theta_j=\sum_{i=0}^rf(i)p_i(j)\]
with multiplicity $m_j$ for $j=0,1,\ldots ,r$.
\end{thm}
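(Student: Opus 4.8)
The plan is to read the distance spectrum directly off the simultaneous diagonalization provided by Theorem~\ref{thm:Johnson}. The whole point is that the distance matrix $D$ of $K(n,r)$ is a \emph{fixed real linear combination} of the association-scheme matrices, namely $D=\sum_{i=0}^r f(i)A_i$, and a linear combination of commuting, simultaneously diagonalizable matrices is diagonalized by the very same eigenbasis. So $D$ inherits the common eigenspace decomposition $\bigoplus_{j=0}^r V_j$, and its eigenvalue on each $V_j$ is obtained simply by substituting the eigenvalues of the $A_i$ into that linear combination.

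First I would confirm that $D=\sum_{i=0}^r f(i)A_i$ (as recorded just before Theorem~\ref{thm:Johnson}): the $(S,T)$ entry of $A_i$ is $1$ exactly when $|S\cap T|=r-i$, in which case the Kneser distance formula gives $d_K(S,T)=f(i)$, so this sum reproduces $D$ entrywise, with the $i=0$ term ($A_0=I$, $f(0)=0$) accounting for the zero diagonal. Next I would invoke Theorem~\ref{thm:Johnson}: the $A_i$ commute and are simultaneously diagonalizable, the space decomposes as $\bigoplus_{j=0}^r V_j$, and each $A_i$ acts on $V_j$ as the scalar $p_i(j)$. Fixing $j$ and any $\bx\in V_j$, linearity yields
\[
D\bx=\sum_{i=0}^r f(i)A_i\bx=\left(\sum_{i=0}^r f(i)p_i(j)\right)\bx=\theta_j\bx,
\]
so $D$ acts as the scalar $\theta_j$ on all of $V_j$; hence $V_j$ lies in the $\theta_j$-eigenspace of $D$ and contributes $\dim V_j=m_j$ to its multiplicity.

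Finally, since $\bigoplus_{j=0}^r V_j$ is the entire space, of dimension ${n\choose r}=\sum_{j=0}^r m_j$, these eigenspaces exhaust the space and no eigenvalue of $D$ is overlooked; listing $\theta_j$ with multiplicity $m_j$ for $j=0,\dots,r$ therefore gives the full distance spectrum. I do not expect any genuine obstacle: all the structural content---commutativity, simultaneous diagonalizability, and the explicit scalars $p_i(j)$ with multiplicities $m_j$---is already packaged in Theorem~\ref{thm:Johnson}, leaving only the one-line linear-algebra observation above. The only point needing mild care is the bookkeeping when distinct indices $j$ happen to give equal values $\theta_j$, in which case their multiplicities add; the stated ``list with multiplicity $m_j$'' convention handles this automatically.
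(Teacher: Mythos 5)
Your proof is correct and follows essentially the same route as the paper's: both write $\D(K(n,r))=\sum_{i=0}^r f(i)A_i$ and read off the eigenvalues $\theta_j=\sum_i f(i)p_i(j)$ on the common eigenspaces $V_j$ supplied by Theorem~\ref{thm:Johnson}. Your version is simply a more detailed write-up (including the entrywise check of the decomposition and the coincidence of distinct $\theta_j$ values, which the paper relegates to a remark).
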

\bpf
Since distance matrix of $K(n,r)$ can be written as $\sum_{i=0}^rf(i)A_i$ and $\{A_i\}_{i=0}^r$ forms a {commuting} family, the distance eigenvalues of $K(n,r)$ are also  linear combinations of the eigenvalues of $A_i$'s.  Theorem \ref{thm:Johnson} gives the common eigenspaces and the eigenvalues with multiplicity, so summing up the corresponding eigenvalues gives the distance eigenvalues of $K(n,r)$.
\epf

Note that Theorem \ref{thm:Kneser} gives us the distance spectrum of the odd graph $O(r)= K(2r+1,r)$.  

\begin{rem}
The statement in Theorem \ref{thm:Kneser} includes all the eigenvalues and multiplicities.  However, it does not guarantee $\theta_j\neq \theta_{j'}$ for different $j$ and $j'$.  If this happens, the multiplicity becomes $m_j+m_{j'}$.  
\end{rem}

\subsection{Halved cube}\label{sshcube}

Here we use a variant way of defining the hypercube $H(d,2)$.  Let $Q_d = (V,E)$ where $V = 2^{[d]}$, that is, the set of all subsets of $[d]$,  and $E = \{ \{S,T\}: |S\Delta T| = 1\}$, so $S$ and $T$ are adjacent if they differ by exactly one element. This is equivalent to the standard definition of $d$-tuples of the elements $0$ and $1$, where 1 in the $i$th position indicates $i\in S$.  Similarly, the halved cube $\frac{1}{2}Q_d = (V,E)$ has $V$ as the even (or odd) subsets of $[d]$ and $E = \{ \{S,T\}: |S\Delta T| = 2\}$. It is known that the halved cube is distance regular and has one positive distance eigenvalue \cite{KS1994}.  We  determine the  eigenvalues for the distance matrices of this graph family in Theorem \ref{thm:hcube} below.
Before beginning the proof,  we require a technical lemma.

\begin{lem} \label{simplifies} The following identities hold:
\begin{enumerate}
\item\label{s1} For $s\geq 1$, $\sum_{k=0}^s (-1)^k \binom{s}{k}=0$.
\item\label{s2} For $s\geq 2$, $\sum_{k=0}^s (-1)^k k\binom{s}{k}=0$.
\item\label{s3} For $d\ge 2$, $ \sum_{i=0}^{\lf d/2\rf} 2i \binom{d}{2 i} =d 2^{d-2}$.
\item\label{s4} For $d\ge 2$,  $\sum_{i=0}^{\lfloor (d-1)/2\rfloor}  (2i+1)\binom{d}{2i+1} =d 2^{d-2}$.
\item\label{s5}  For $d\geq 2$, $\sum_{i=0}^{\lfloor d/2\rfloor} (2i)^2 \binom{d}{2i}=d(d+1)2^{d-3}$.
\item\label{s6} For $a\geq 2, b\ge 0$, $\sum_{i=\lceil b/2\rceil}^{\lfloor (a+b)/2\rfloor} i \binom{a}{2i-b}=2^{a-3}(a+2b)$.
\end{enumerate}
\end{lem}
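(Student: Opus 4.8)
The plan is to treat all six as standard binomial identities and to build them up from two workhorses: the binomial theorem and the absorption identity $k\binom{m}{k}=m\binom{m-1}{k-1}$, together with the even/odd partial-sum fact $\sum_{k \text{ even}}\binom{m}{k}=\sum_{k\text{ odd}}\binom{m}{k}=2^{m-1}$ (valid for $m\ge 1$), which itself comes from adding and subtracting the two evaluations $(1+1)^m=2^m$ and $(1-1)^m=0$ of the binomial theorem. I would prove the identities essentially in the order listed, since the later ones reduce to the earlier ones.

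First I would dispatch \eqref{s1} and \eqref{s2}. Identity \eqref{s1} is exactly $(1-1)^s=0$. For \eqref{s2}, absorption gives $k\binom{s}{k}=s\binom{s-1}{k-1}$, so that $\sum_k (-1)^k k\binom{s}{k}=-s\sum_{j}(-1)^{j}\binom{s-1}{j}$, which vanishes by \eqref{s1} applied with $s-1\ge 1$.

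Next, identities \eqref{s3} and \eqref{s4} follow from a single absorption step plus the even/odd partial-sum fact. Writing $k=2i$, absorption gives $2i\binom{d}{2i}=d\binom{d-1}{2i-1}$, so $\sum_i 2i\binom{d}{2i}=d\sum_{j \text{ odd}}\binom{d-1}{j}=d\,2^{d-2}$; the companion $(2i+1)\binom{d}{2i+1}=d\binom{d-1}{2i}$ gives \eqref{s4} by summing $\binom{d-1}{\cdot}$ over even indices. For \eqref{s5} I would absorb twice, namely $k^2\binom{d}{k}=d(d-1)\binom{d-2}{k-2}+d\binom{d-1}{k-1}$, and then sum over even $k=2i$; reindexing each partial sum onto a parity class of $\binom{d-2}{\cdot}$ and $\binom{d-1}{\cdot}$ respectively yields $d(d-1)2^{d-3}+d\,2^{d-2}=d(d+1)2^{d-3}$. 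Here the absorption reduction requires $d-2\ge 1$, so I would verify the small case directly.

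The only genuinely fiddly part is \eqref{s6}, which I expect to be the main obstacle—not because it is deep but because of the index bookkeeping. I would substitute $k=2i-b$, noting that as $i$ runs over its stated range, $k$ runs exactly over the integers in $[0,a]$ congruent to $b\pmod 2$. Writing $i=\tfrac{k+b}{2}$ splits the sum as $\tfrac12\sum_{k\equiv b}k\binom{a}{k}+\tfrac b2\sum_{k\equiv b}\binom{a}{k}$. By \eqref{s3}/\eqref{s4} the first parity sum equals $a\,2^{a-2}$ regardless of the parity of $b$, and the even/odd partial-sum fact gives $\sum_{k\equiv b}\binom{a}{k}=2^{a-1}$; combining, the total is $\tfrac12 a\,2^{a-2}+\tfrac b2\,2^{a-1}=2^{a-3}(a+2b)$, as claimed. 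The care needed lies entirely in checking that the reindexing endpoints match the floor/ceiling bounds and that the two parity cases for $b$ are handled uniformly.
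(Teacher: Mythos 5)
Your proof is correct wherever the statement is correct, and for parts (2)--(5) it takes a genuinely different (though equally elementary) route from the paper's. The paper's workhorse is the differentiated binomial theorem $d(1+x)^{d-1}=\sum_{k=1}^{d}k\binom{d}{k}x^{k-1}$: evaluating at $x=-1$ shows the even-index and odd-index sums of $k\binom{d}{k}$ agree, evaluating at $x=1$ shows their total is $d2^{d-1}$, and halving gives (3) and (4); part (5) is dispatched by "a similar approach" applied to the second-moment identity in Brualdi, and parts (1)--(2) are cited outright. Your workhorse is instead the absorption identity $k\binom{m}{k}=m\binom{m-1}{k-1}$ together with the parity partial-sum fact $\sum_{k\,\mathrm{even}}\binom{m}{k}=\sum_{k\,\mathrm{odd}}\binom{m}{k}=2^{m-1}$; this reduces (2), (3), (4) to (1) in one step each, and your double absorption $k^2\binom{d}{k}=d(d-1)\binom{d-2}{k-2}+d\binom{d-1}{k-1}$ makes (5) self-contained rather than delegated to a reference. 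The two approaches are interchangeable in difficulty; yours buys a uniform, citation-free treatment, while the paper's buys brevity. For part (6) -- the only identity with content specific to this lemma -- your argument is essentially identical to the paper's: the splitting $i\binom{a}{2i-b}=\tfrac12(2i-b)\binom{a}{2i-b}+\tfrac{b}{2}\binom{a}{2i-b}$, with the first sum handled by (3)/(4) and the second by the parity partial sums; you merely reindex by $k=2i-b$ first, which makes the endpoint bookkeeping cleaner.

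One caveat you should be aware of: your plan to "verify the small case directly" for (5) cannot succeed, because the identity as stated is false at $d=2$: the left side is $0+2^2\binom{2}{2}=4$, while the right side is $2\cdot 3\cdot 2^{-1}=3$. This is a defect of the lemma's statement (the hypothesis should read $d\ge 3$), not of your argument -- the paper's own proof is subject to the same restriction, since the alternating sum $\sum_{k}(-1)^k k^2\binom{d}{k}$ vanishes only for $d\ge 3$. The misstatement is harmless downstream, as (5) is invoked only in the proof of Theorem \ref{thm:hcube}, where $d\ge 4$. So your proof establishes everything that is true, and the care you take with base cases would, if carried out, actually expose this minor error in the lemma.
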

{\begin{proof} The first two statements are well known combinatorial results (in \cite{Br04} see: equation (5.4), and the identity $n(1+x)^{n-1}=\sum_{k=1}^nk{n\choose k}x^k$ on page 134 evaluated  at $x=-1$, respectively). For the third and fourth statements, consider the following identity (\cite[p. 134]{Br04})
\beq d(1+x)^{d-1}= \sum_{k=1}^d k\binom{d}{k}x^{k-1}. \label{eqxsimp5}\eeq
We evaluate \eqref{eqxsimp5} at $x=-1$, which yields 
\bea 0 &=& \sum_{k=1}^d k\binom{d}{k}(-1)^{k-1} \\ 
\sum_{\substack{0\le k\le d\\k\text{ even}}} k\binom{d}{k} &=& \sum_{\substack{0\le k\le d\\k\text{ odd}}} k \binom{d}{k}. 
\eea
By evaluating \eqref{eqxsimp5} at $x=1$,  we have $d(2)^{d-1}=\sum_{k=1}^{d}k \binom{d}{k}$. Since exactly half of this sum can be attributed to even $k$ and half to odd $k$, this proves statements \eqref{s3} and \eqref{s4}. A similar approach applied to \cite[Equation (5.13)]{Br04}  proves \eqref{s5}.

To prove the sixth, we can consider simplifying as follows:
\beq \sum_{i=0}^{\lfloor (a+b)/2\rfloor} i \binom{a}{2i-b} = \frac{1}{2} \sum_{i=0}^{\lfloor (a+b)/2\rfloor} (2i-b)\binom{a}{2i-b} +\frac b 2 \sum_{i=0}^{\lfloor (a+b)/2\rfloor}\binom{a}{2i-b} \label{simplify6}. \eeq

On the right hand side of equation \eqref{simplify6}, the left sum is either statement \eqref{s3} or \eqref{s4} and the right sum is  equation (5.6) or (5.7) in \cite{Br04}, both depending on the parity of $b$ and up to reindexing. We can therefore simplify \eqref{simplify6} as 
\[ \sum_{i=0}^{\lfloor (a+b)/2\rfloor} i \binom{a}{2i-b} = 2^{a-3}(a+2b). \]
 

This  completes the proof.
\end{proof}
}

For the proof of the next result, we use a technique generalized from a classical approach to calculate the adjacency eigenvalues for the hypercube. To enable this approach, rather than working with the distance matrix of the halved cube, 
$\D({\frac{1}{2} Q_d})$ (which is indexed by only the even sets or the odd sets) we consider a matrix $D$ indexed by all the subsets of $[d]$.

\begin{thm}\label{thm:hcube} The distance spectrum of the halved cube for $d\geq4$ is $\left\{d2^{d-3}, 0^{(2^{d-1}-(d+1))},(-2^{d-3})^{(d)}\right\}$. 
\end{thm}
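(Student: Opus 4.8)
The plan is to recycle the classical Fourier diagonalization of the hypercube, which the sentence preceding the theorem already flags. Rather than diagonalizing $\D(\tfrac12 Q_d)$ directly, I would introduce the symmetric matrix $D$ indexed by \emph{all} $2^d$ subsets of $[d]$, with $(S,T)$-entry $\lfloor |S\Delta T|/2\rfloor$. Because the halved-cube distance between two even sets (and between two odd sets) is exactly $\tfrac12|S\Delta T|$, listing the even subsets first displays $D$ in block form $\left(\begin{smallmatrix} B & C\\ C^\t & B'\end{smallmatrix}\right)$ whose diagonal blocks $B,B'$ are each a copy of $\D(\tfrac12 Q_d)$. The gain is that the $(S,T)$-entry of $D$ depends only on $S\Delta T$, so $D$ is invariant under the translation action of $\mathbb{Z}_2^d$ and is therefore simultaneously diagonalized by the $\pm1$ characters $\chi_A(x)=(-1)^{|A\cap x|}$, $A\subseteq[d]$ --- precisely the eigenvectors used for $\A(Q_d)$ and $\D(Q_d)$.

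For $|A|=w$ the corresponding eigenvalue is $\mu_w=\sum_{U\subseteq[d]}\lfloor|U|/2\rfloor(-1)^{|A\cap U|}$, and the second step is to evaluate it. Splitting $U$ by $j=|A\cap U|$ and by the parity of $|U|$ turns $\mu_w$ into the parity-restricted binomial sums collected in Lemma \ref{simplifies}: the $w=0$ eigenvalue $\mu_0=\sum_k\lfloor k/2\rfloor\binom{d}{k}$ is handled by items \eqref{s3} and \eqref{s4}, the mixed inner sums for general $w$ are exactly item \eqref{s6}, and items \eqref{s1}--\eqref{s2} force the vanishing terms. I expect the result to be $\mu_0=(d-1)2^{d-2}$, $\mu_1=-2^{d-2}$, $\mu_d=2^{d-2}$, and $\mu_w=0$ for every remaining $w$.

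The third step passes from $\spec(D)$ to $\spec\big(\D(\tfrac12 Q_d)\big)$ using the complementation involution $A\mapsto\OL A$. Since $\chi_{\OL A}=\chi_{[d]}\,\chi_A$ pointwise, the restrictions of $\chi_A$ and $\chi_{\OL A}$ to the even sets agree --- call this common vector $p_A$ --- while their restrictions to the odd sets are negatives of one another. Writing $\chi_A=\left(\begin{smallmatrix}p_A\\ q_A\end{smallmatrix}\right)$ and $\chi_{\OL A}=\left(\begin{smallmatrix}p_A\\ -q_A\end{smallmatrix}\right)$ and adding the first block-rows of $D\chi_A=\mu_w\chi_A$ and $D\chi_{\OL A}=\mu_{d-w}\chi_{\OL A}$ cancels the $C$-term and gives $B\,p_A=\tfrac12(\mu_w+\mu_{d-w})\,p_A$. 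Thus $p_A$ is an eigenvector of $\D(\tfrac12 Q_d)$ with eigenvalue $\nu_w:=\tfrac12(\mu_w+\mu_{d-w})$. A one-line computation of $\langle p_A,p_{A'}\rangle$ (namely $2^{d-1}$ when $A'\in\{A,\OL A\}$ and $0$ otherwise) shows that the $2^{d-1}$ vectors $p_A$, one for each complementary pair $\{A,\OL A\}$, are orthogonal and hence form a basis, so no eigenvalue is missed.

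Finally I would tally the $\nu_w$ over complementary pairs: the pair $\{\emptyset,[d]\}$ gives $\nu_0=\tfrac12\big((d-1)2^{d-2}+2^{d-2}\big)=d2^{d-3}$ (multiplicity $1$); the $d$ pairs with $|A|=1$ give $\nu_1=\tfrac12(-2^{d-2}+0)=-2^{d-3}$ (multiplicity $d$); and the remaining $2^{d-1}-d-1$ pairs give $\nu_w=0$. This is exactly $\{d2^{d-3},0^{(2^{d-1}-d-1)},(-2^{d-3})^{(d)}\}$, the hypothesis $d\ge4$ only ensuring the zero eigenvalue really appears. I expect the genuine difficulty to be entirely computational: executing the parity-split binomial evaluations for $\mu_w$ without error (the very reason Lemma \ref{simplifies} is isolated in advance) and correctly accounting for multiplicities under the $A\leftrightarrow\OL A$ identification.
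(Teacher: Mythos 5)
Your proposal is correct, and the deferred computations do come out as you predict: $\mu_0=(d-1)2^{d-2}$, $\mu_1=-2^{d-2}$, $\mu_d=2^{d-2}$, and $\mu_w=0$ for $2\le w\le d-1$, which under your folding rule $\nu_w=\tfrac12(\mu_w+\mu_{d-w})$ gives exactly the claimed spectrum with the right multiplicities. However, your route genuinely differs from the paper's in how the doubled $2^d\times 2^d$ matrix is chosen and exploited. The paper also doubles up, but it defines the opposite-parity entries to be $0$ rather than $\lfloor|S\Delta T|/2\rfloor$; its $D$ is then (after permutation) block diagonal, equal to two copies of $\D(\frac{1}{2}Q_d)$, so its spectrum is literally two copies of the desired one and no folding is needed---the characters $\bv_S$ are shown to be eigenvectors of that $D$ by a combinatorial counting argument, and all multiplicities are halved at the end. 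Your matrix is instead a genuine $\mathbb{Z}_2^d$-convolution with no zeroing, which makes diagonalization by the characters immediate and the Fourier coefficients $\mu_w$ cleanly computable, but it forces the extra step of pairing $\chi_A$ with $\chi_{\OL A}$, adding the even block-rows to cancel the off-diagonal block $C$, and checking that the folded vectors $p_A$ form an orthogonal basis of the even-set space (your inner-product computation does settle this). It is worth noting that the paper's matrix is in fact also translation-invariant---its entries also depend only on $S\Delta T$---it just never frames matters that way; the paper's zeroing is the cleverer choice for getting from $D$ back to the halved cube, while your choice is the cleverer one for the spectral computation. What your approach buys in addition is an explicit orthogonal eigenbasis $\{p_A\}$ of $\D(\frac{1}{2}Q_d)$ itself, something the paper explicitly declines to produce (its closing remark notes that extracting eigenvectors of $\D(\frac{1}{2}Q_d)$ from its $D$ is ``not straightforward''). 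One small point to watch when you execute the plan: identity \eqref{s6} of Lemma \ref{simplifies} requires $a\ge 2$, so the evaluations of $\mu_{d-1}$ and $\mu_d$, where $a=d-w\le 1$, need separate one-line treatments---the exact analogue of the paper's restriction to $2\le s\le d-2$ in its final computation.
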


\bpf
Define a matrix $D$  indexed by all the subsets of $[d]$ with $D_{XY} = 0$ whenever $|X|$ and $ |Y|$ differ in parity and $D_{XY}=\D(\frac{1}{2} Q_d)_{XY}=\frac{1}{2}|X\Delta Y|$ whenever $|X|$ and $|Y|$ have the same parity.  Notice that up to reordering of the rows and columns, $D=\mtx{\D({\frac{1}{2} Q_d}) & 0\\0 &\D({\frac{1}{2} Q_d})}$. Thus by finding the spectrum of $D$, we have found exactly two copies of the spectrum of $\D({\frac{1}{2} Q_d})$. 

Fix a subset $S\subseteq [d]$, 
and define a vector $\bv_S$  as follows: for any  subset $T\subseteq [d]$, the $T$-th entry of $\bv_S$ is  $\bv_S(T)=(-1)^{|S\cap T|}$.  We remark that the vectors $\bv_S$ form a complete set of orthogonal eigenvectors of the adjacency matrix of hypercube, and thus these vectors are necessarily independent. These eigenvectors can be constructed inductively using cartesian products of graphs (see, for example, \cite[Section 1.4.6]{BH}).  
We claim that for any subset $T\subseteq [d]$, 
\begin{equation} (D\bv_S)(T)=\left(\sum_{i=0}^{\lf d/2\rf}
\sum_{\substack{U\\ |U\Delta T|=2i}} i (-1)^{|(S\cap T) \Delta (S\cap U)|}\right) \bv_S(T). \label{Dv} \end{equation}
 To prove this claim, first consider an even subset $T\subseteq [d]$. Then, because the only nonzero entries of the row indexed by $T$ are those indexed by even subsets, 
\begin{eqnarray}
(D \bv_S)(T) &=& \sum_{\substack{U \subseteq [d]\\{|U| \text{ even}}}} D_{TU} (-1)^{|S \cap U|}  \nonumber \\
&=& \sum_{\substack{U \subseteq [d]\\{|U| \text{ even}}}} \frac{1}{2} |T \Delta U| (-1)^{|S \cap U|} \nonumber\\
&=& \sum_{i=0}^{\lf d/2\rf} \sum_{\substack{U\subseteq [d]\\  |T \Delta U| = 2i}} i (-1)^{|S \cap U|} \nonumber\\
&=& \label{capU} \sum_{i=0}^{\lf d/2\rf} \bigg( i \sum_{\substack{U\subseteq [d]\\  |T \Delta U| = 2i}} (-1)^{|S \cap U|} \bigg) \\
&=&\label{capT} \left[ \sum_{i=0}^{\lf d/2\rf} \bigg( i \sum_{\substack{U\subseteq [d]\\  |T \Delta U| = 2i}} (-1)^{|(S \cap T ) \Delta (S \cap U) |} \bigg) \right]  (-1)^{|S \cap T|}  
\end{eqnarray}
 where equation~\eqref{capT} follows from equation~\eqref{capU} by the fact that 
\[|S \cap U|+2|(S\cap T)\setminus (S\cap T\cap U)|=|(S\cap T)\Delta (S\cap U)|+|S\cap T|\]
and the fact that $(-1)^{2x}=1$.  Notice in the work above that the assumption that the sets were even was subsumed by the requirement that the symmetric difference with $T$ is even.  By this logic, an identical argument follows for an odd subset $T$,  which proves equation~\eqref{Dv}. 

We can consider a combinatorial interpretation of the above. Let $\mathcal{O}(d,S,T,i)$ (respectively, $\mathcal{E}(d,S,T,i)$) be the number of subsets $U \subseteq [d]$ with the same parity as $T$ such that 
\begin{itemize}
\item $|T \Delta U| = 2i$, and
\item $|(S \cap T ) \Delta (S \cap U)|$ is odd (respectively, even).
\end{itemize}

Then, we have,
\[
(D \bv_S)(T)  =\left( \sum_{i=0}^{\lf d/2\rf} i (-\mathcal{O}(d,S,T,i)+ \mathcal{E}(d,S,T,i)) \right)   \bv_S(T).
\]

We now consider the vectors associated with a few specific sets $S$. First, consider simultaneously the empty set and the set $[d]$. Notice for any $T$, we have $|(\emptyset \cap T ) \Delta (\emptyset \cap U)|=0$ and $|([d] \cap T ) \Delta ([d] \cap U)|=2i$. It follows that $\mathcal{O}(d,\emptyset,T,i)=\mathcal{O}(d,[d],T,i)=0$ and $\mathcal{E}(d,\emptyset,T,i)=\mathcal{E}(d,[d],T,i)=\binom{d}{2i}$. The last equation follows by picking $2i$ elements of $[d]$ and toggling them in $T$, which ensures the required symmetric difference. Then, by Lemma~\ref{simplifies},
\[(D \bv_\emptyset)(T)  =\left( \sum_{i=0}^{\lf d/2\rf} i \binom{d}{2 i}  \right) \bv_\emptyset(T)  = d 2^{d-3} \bv_\emptyset(T) \qquad (D \bv_{[d]})(T)  =\left( \sum_{i=0}^{\lf d/2\rf} i \binom{d}{2 i}  \right) \bv_{[d]}(T)  = d 2^{d-3} \bv_{[d]}(T)\]
so $d2^{d-3}$ is an eigenvalue of $D$  with multiplicity at least 2. Therefore $\D(\frac{1}{2}Q_d)$ has $d2^{d-3}$ as an eigenvalue.

Second, consider the singleton sets $\{s\}$ and the sets of size $d-1$, denoted $[d]\setminus\{s\}$, for all $s \in [d]$. 
We show that for any $T$, $\mathcal{O}(d,\{s\},T,i) =\mathcal{O}(d,[d]\setminus\{s\},T,i) = \binom{d-1}{2i-1} $ and $\mathcal{E}(d,\{s\},T,i) =\mathcal{E}(d,[d]\setminus\{s\},T,i)  =\binom{d-1}{2i}$. First consider the singletons. If $s \in T$, then there are $\binom{d-1}{2i}$ sets $U$ that have $s \in U$ and also the required symmetric difference. We know this because we cannot toggle $s$ out of $T$, but still must choose $2i$ to toggle from the remaining $d-1$. Notice that in this case, $|(\{s\}\cap T)\Delta (\{s\}\cap U)|=0$. Similarly, there are $\binom{d-1}{2i-1}$ sets $U$ that have $s \not\in U$ and also have the required symmetric difference. We know this because we must toggle $s$ out of $T$, and therefore can only pick $2i-1$ elements to toggle from the remaining $d-1$. Notice that in this case, $|(\{s\}\cap T)\Delta (\{s\}\cap U)|=1$. A similar argument of case analysis follows  if $s \not\in T$.  This completes the case of $S=\{s\}$. The argument is similar  for the sets of size $d-1$. 

Therefore, by Lemma~\ref{simplifies}
\bea (D\bv_{\{s\}})(T)
&=&\left(\sum_{i=0}^{\lf d/2\rf} i \left( \binom{d-1}{2i}-\binom{d-1}{2i-1}\right)\right)\bv_{\{s\}}(T) \\
&=&\left(\sum_{i=0}^{\lf d/2\rf} i \left[ \frac{d-4i}{d} \binom{d}{2i}\right]\right)\bv_{\{s\}}(T)\\
&=& \left(\sum_{i=0}^{\lf d/2\rf} i \binom{d}{2i}-\frac{4}{d}\sum_{i=0}^{\lf d/2\rf} i^2 \binom{d}{2i}\right)\bv_{\{s\}}(T)\\
&=&\left( \sum_{i=0}^{\lf d/2\rf} i \binom{d}{2i}-\frac{1}{d}\sum_{i=0}^{\lf d/2\rf} (2i)^2 \binom{d}{2i}\right)\bv_{\{s\}}(T)\\
&=&\left( d2^{d-3}-(d+1)2^{d-3}\right)\bv_{\{s\}}(T) \\
&=& -2^{d-3}\bv_{\{s\}}(T),
\eea
and similarly, \[ (D\bv_{[d]\setminus \{s\}})(T)=-2^{d-3}\bv_{[d]\setminus \{s\}}(T)\] so $-2^{d-3}$ is an eigenvalue of $D$ with multiplicity at least $2d$.  This implies $-2^{d-3}$ is an eigenvalue of $\D(\frac{1}{2}Q_d)$ with multiplicity at least $d$. 

It remains to be shown that the remaining distance eigenvalues of the halved cube are zero. To do this, we return to equation~\eqref{capT}: 
\[ (D\bv_S)(T)=\left ( \sum_{i=0}^{\lf d/2\rf} \bigg[ i \sum_{\substack{U\subseteq [d]\\ |T \Delta U| = 2i}} (-1)^{|(S \cap T ) \Delta (S \cap U) |} \bigg] \right)  (-1)^{|S \cap T|}  \]
and proceed to count the sets $U$. We now use $s$ to denote $|S|$ rather than an element.  To ensure the correct symmetric difference from $T$, we consider choosing $2i$ elements from $[d]$ and either adding them or subtracting them from $T$.  However, to determine the exponent in the inner sum, we require knowing whether any of the $2i$ elements are in the set $S$ that defines the vector.  Suppose there are $k$ such elements in $S$.  If we consider all possible values of $k$, we can rewrite the equation above as 
\[ (D\bv_S)(T)=\left(\sum_{i=0}^{\lf d/2\rf}i\sum_{k=0}^{2i} \binom{d-s}{2i-k} \binom{s}{k} (-1)^k \right) (-1)^{|S \cap T|}.\] 

It suffices to show that $\sum_{i=0}^{\lf d/2\rf}i\sum_{k=0}^{2i} \binom{d-s}{2i-k} \binom{s}{k} (-1)^k=0$ for all $s$ such that $2\leq s\leq d-2$.  
We return to the double sum and simplify using Lemma \ref{simplifies} as follows.  

\bea \sum_{i=0}^{\lf d/2 \rf}i\sum_{k=0}^{2i} \binom{d-s}{2i-k} \binom{s}{k} (-1)^k  
&=& \sum_{i=0}^{\lf d/2 \rf}i\sum_{k=0}^{s} \binom{d-s}{2i-k} \binom{s}{k} (-1)^k  \\
&=& \sum_{k=0}^s \binom{s}{k} (-1)^k \sum_{i=0}^{\lf d/2 \rf} i \binom{d-s}{2i-k} \\ 
&=& \sum_{k=0}^s \binom{s}{k} (-1)^k \sum_{i=\lceil k/2\rceil}^{\lfloor (d-s+k)/2\rfloor} i \binom{d-s}{2i-k} \\ 
&=&  \sum_{k=0}^s \binom{s}{k} (-1)^k \left( 2^{d-s-3}(d-s+2k)\right) \\ 
&=& 2^{d-s-3}\left((d-s)\sum_{k=0}^s (-1)^k \binom{s}{k} +2\sum_{k=0}^s(-1)^k k \binom{s}{k}\right)\\ 
&=&0.
\eea
This proves that the remaining eigenvalues are all zero, which completes the claimed spectrum. The alert reader may notice that by establishing \eqref{Dv} we have actually found eigenvectors for $D$; however, unlike the natural relationship between spectra, the process of  finding eigenvectors of $\D({\frac{1}{2} Q_d})$ from those of $D$ is not straightforward, and since our primary interest is in the eigenvalues we have not attempted it.
\epf


\section{Number of distinct distance eigenvalues}\label{sndeval}


In this section we construct examples of graphs having few distinct distance eigenvalues and various other properties and establish a lower bound on the number of distinct distance eigenvalues of a tree. Let $\qD(G)$ denote the number of distinct distance eigenvalues of $G$.

\subsection{Fewer distinct distance eigenvalues than diameter}\label{ssqDdiam}

We have answered the following open question:

\begin{quest}\label{AP15P4.3}
{\rm \cite[Problem 4.3]{AP15}} Are there connected graphs that are not distance regular with diameter $d$ and having less than $d +1$ distinct distance eigenvalues?
\end{quest}

\begin{ex}\label{exAP}{\rm Consider the graph $Q_d^\ell$ consisting of $Q_d$ with a leaf appended.  Figure \ref{figQ4leaf} shows $Q_4^\ell$.  Since $Q_d$ has diameter $d$, $Q_d^\ell$ has diameter $d+1$.  Since $\D(Q_d)$ has 3 distinct eigenvalues \cite[Theorem 2.2]{I2009} and the eigenvalues of $\D(Q_d^\ell)$ interlace those of $\D(Q_d)$, $\D(Q_d^\ell)$ has at most $5$ distinct eigenvalues.  For example, $\D(Q_4^\ell)$ has the 5 eigenvalues $-10.3149\dots$, $-8$ with multiplicity 3, $-1.72176\dots$, 0 with multiplicity 11, 
and $36.0366\dots$.  However $\diam(Q_4^\ell)+1 = 6$.

}\end{ex}

\begin{figure}[h!]
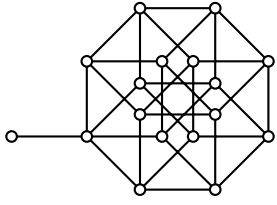

\begin{center}
\picAAA
\caption{$Q_4^\ell=Q_4$ with a leaf appended \label{figQ4leaf}}
\end{center}
\end{figure}\vspace{-20pt}

\subsection{Few distinct distance eigenvalues and many distinct degrees}\label{ssqDdeg}


The next result follows immediately from Theorem \ref{tspectrcprod}.

\begin{prop}\label{spectrcprod} If $G$ is a transmission regular graph  of order $n$ with $\dspec(G)= \{ \rho,  \theta_2,\dots,\theta_n\}$, then
\[\spec_{\D}(G \cp G)=\{ 2n \rho\}\cup\{ (n \theta_2)^{(2)},\dots,(n\theta_n)^{(2)}\} \cup \{0^{((n-1)^2)}\}.\]
\end{prop}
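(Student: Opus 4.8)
The plan is to derive Proposition~\ref{spectrcprod} as the special case $G' = G$ of Theorem~\ref{tspectrcprod}. Since $G$ is transmission regular, it satisfies the hypotheses of that theorem, and so does the second copy; both have order $n$ and the identical distance spectrum $\dspec(G) = \{\rho, \theta_2, \dots, \theta_n\}$. So the whole argument is a substitution followed by bookkeeping of the resulting multiset.

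First I would invoke Theorem~\ref{tspectrcprod} with $G' = G$, $n' = n$, and $\theta'_i = \theta_i$ for each $i$. The theorem states that
\[
\dspec(G \cp G') = \{n'\rho + n\rho'\} \cup \{n'\theta_2, \dots, n'\theta_n\} \cup \{n\theta'_2, \dots, n\theta'_{n'}\} \cup \{0^{((n-1)(n'-1))}\}.
\]
Setting $n' = n$, $\rho' = \rho$, and $\theta'_i = \theta_i$, the leading term becomes $n\rho + n\rho = 2n\rho$, and the exponent on $0$ becomes $(n-1)(n'-1) = (n-1)^2$.

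The one point requiring a genuine observation, rather than pure substitution, is that the two middle families $\{n\theta_2, \dots, n\theta_n\}$ and $\{n\theta_2, \dots, n\theta_n\}$ now coincide term by term, so that under the multiset union each value $n\theta_i$ (for $i = 2, \dots, n$) appears with total multiplicity $2$. This is why the statement records them as $(n\theta_2)^{(2)}, \dots, (n\theta_n)^{(2)}$ rather than as two separate lists. I would emphasize that the union in Theorem~\ref{tspectrcprod} is a multiset union, so no collapsing of equal values occurs beyond what is written; the doubling is genuine and not an artifact of coincidentally equal $\theta_i$'s. Combining these three points yields exactly the claimed spectrum, completing the proof.

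There is no real obstacle here, as the result is an immediate corollary; the only thing to be careful about is the multiplicity bookkeeping in the middle terms, ensuring the factor of two on the $n\theta_i$ is correctly attributed to the two copies of $G$ rather than mistakenly merged with the zero eigenvalues or the leading term.
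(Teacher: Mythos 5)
Your proof is correct and matches the paper exactly: the paper states that this proposition ``follows immediately from Theorem \ref{tspectrcprod},'' which is precisely your substitution $G'=G$, $n'=n$, $\theta'_i=\theta_i$. Your careful bookkeeping of the doubled multiplicities $(n\theta_i)^{(2)}$ under the multiset union is the right (and only) point of substance, so nothing is missing.
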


\begin{rem}\label{degcprod} Let $G$ and $H$ be graphs, and let  $\deg(G)$ denote the set of distinct degrees of $G$. For $x\in V(G)$ and $y\in V(H)$, $\ds \deg_{G\cp H}(x,y) = \deg_Gx+\deg_Hy$.  Thus
  $\deg({G \cp H}) = \deg(G)+\deg(H):= \{ a+b \colon a  \in \deg(G), b \in \deg(H)\}$.
\end{rem}

There is a graph $G$ with arbitrarily many distinct degrees and $\mathcal{D}(G)$ having exactly 5 distinct eigenvalues.

\begin{ex}\label{exmanydeg}
 Let $G$ be the graph in Figure \ref{figmanydeg}. Observe that $G$ is transmission regular with distance eigenvalues 
$\left\{14,\left(\frac{-5+\sqrt{33}}2\right)^{(2)},(-1)^{(4)}, \left(\frac{-5-\sqrt{33}}2\right)^{(2)}\right\}$. Furthermore, $\deg(G)=\{3,4\}$.  Let $G^i$ be $G \cp \cdots \cp G$ with $i$ copies of $G$. By Proposition \ref{spectrcprod}, $\qD(G \cp G)= 5$ and one of the distance eigenvalues is zero, and by induction $\qD(G^{2^k})=5$. By inductively applying Remark \ref{degcprod}, $G^{2^k}$ has at least $2^k+1$ distinct degrees. \end{ex}

\begin{figure}[h!]
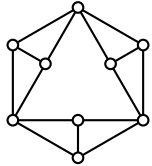

\begin{center}
\picBBB
\caption{Transmission regular graph $G$ with 4 eigenvalues that is not regular  \label{figmanydeg}}
\end{center}
\end{figure}\vspace{-20pt}

\subsection{Minimum number of distinct distance eigenvalues of a tree}\label{qDtree}

In studies of matrices or  families of matrices associated with a graph $G$, it is sometimes the case that $\diam(G)+1$ is a lower bound for the number of  distinct eigenvalues for some or all graphs $G$.  In many situations a  real symmetric $n\x n$ matrix $A$ is studied by  means of its graph $\G(A)$, which has vertices $\{1,\dots,n\}$ and edges $ij$ exactly where $a_{ij}\ne 0$.  It is well known that  a nonnegative matrix $A$ has at least $\diam(\G(A))+1$ distinct eigenvalues \cite[Theorem 2.2.1]{BR91}, and any real symmetric matrix $A$ has at least $\diam(\G(A))+1$ distinct eigenvalues if $\G(A)$ is a tree \cite[Theorem 2]{LJ02}.\footnote{ In \cite{LJ02}, $d(G)=\diam(G)+1$.}  Note that $\G(\D(T))$ is not a tree even when $T$ is a tree. 

\begin{quest}\label{qtree} 
For a tree $T$, is the number of distinct distance eigenvalues  $\qD(T)$  at least $\diam(T)+1$?   
\end{quest}

 The answer is yes for all trees of order at most 20, as determined through computations in {\em Sage} \cite{code}.  We can prove the following weaker bound.

\begin{prop}\label{treeprop}
Let $T$ be a tree.  Then $T$ has at least $\lfloor\frac{\diam(T)}{2}\rfloor$ distinct distance eigenvalues.
\end{prop}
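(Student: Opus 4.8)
The obstruction to applying \cite{LJ02} directly is exactly the point noted just before the statement: $\G(\D(T))$ is complete rather than a tree, so powers of $\D=\D(T)$ have no useful zero pattern. The plan is to pass to the inverse. Since $\det\D = (-1)^{n-1}(n-1)2^{n-2}\ne 0$ (Graham--Pollak), $\D$ is invertible and $\D^{-1}$ has exactly $\qD(T)$ distinct eigenvalues, the reciprocal map being a bijection on the nonzero spectrum. The structural input I would use is the Graham--Lovász formula for trees,
\[ \D^{-1} = -\tfrac12 L + \tfrac{1}{2(n-1)}\,\mathbf{g}\mathbf{g}^{\t}, \]
where $L$ is the Laplacian of $T$ and $\mathbf{g}$ has entries $g_i = 2-\deg(v_i)$. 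Rearranged this reads $L = -2\D^{-1} + \tfrac{1}{n-1}\mathbf{g}\mathbf{g}^{\t}$, so $L$ is a \emph{rank-one perturbation} of $-2\D^{-1}$.

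First I would record the two endpoints of the estimate. The graph $\G(L)$ equals $T$ exactly, since for $i\ne j$ we have $L_{ij}\ne 0$ iff $ij\in E(T)$; hence $\G(L)$ is a tree of diameter $d:=\diam(T)$, and by \cite[Theorem 2]{LJ02} the matrix $L$ has at least $d+1$ distinct eigenvalues. On the other side, $-2\D^{-1}$ has exactly $\qD(T)$ distinct eigenvalues.

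The heart of the argument is a lemma controlling how a rank-one perturbation can inflate the number of distinct eigenvalues: if $M$ is real symmetric with $p$ distinct eigenvalues and $N = M + \rho\,\bv\bv^{\t}$, then $N$ has at most $2p$ distinct eigenvalues. To prove it, let $\mu_1,\dots,\mu_p$ be the distinct eigenvalues of $M$ with eigenspaces $E_i$ of dimension $m_i$. For $w\in E_i\cap \bv^{\perp}$ one has $Nw = Mw + \rho\bv(\bv^{\t}w)=\mu_i w$, so every such $w$ is an eigenvector of $N$ for $\mu_i$; as $\dim(E_i\cap\bv^{\perp})\ge m_i-1$, these account for at least $\sum_i(m_i-1)=n-p$ eigenvalues of $N$ all lying in the $p$-element set $\{\mu_i\}$. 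The remaining at most $p$ eigenvalues contribute at most $p$ further distinct values, for a total of at most $2p$. Applying this with $M=-2\D^{-1}$, $N=L$, $\rho=\tfrac{1}{n-1}$, $\bv=\mathbf{g}$ gives $d+1 \le 2\,\qD(T)$, whence $\qD(T)\ge \tfrac{d+1}{2}$, and since $\qD(T)$ is an integer, $\qD(T)\ge \lceil (d+1)/2\rceil \ge \lfloor d/2\rfloor$.

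The only genuinely nontrivial linear-algebra step is the rank-one perturbation lemma; everything else is assembly of known facts (Graham--Pollak invertibility, the Graham--Lovász inverse formula, and the tree bound of \cite{LJ02}), so I expect that lemma and the careful bookkeeping that distinct-eigenvalue counts survive the passages $\D\mapsto\D^{-1}\mapsto -2\D^{-1}$ to be where care is needed. I note that this route actually yields the stronger bound $\qD(T)\ge\lceil(d+1)/2\rceil$, comfortably implying the stated $\lfloor\diam(T)/2\rfloor$.
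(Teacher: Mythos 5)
Your proof is correct, and it takes a genuinely different route from the paper's. The paper works with the line graph: by \cite[Theorem 2.2.1]{BR91} the nonnegative matrix $\A(L(T))$ has at least $\diam(L(T))+1=\diam(T)$ distinct eigenvalues, and Merris's theorem \cite{M1990} says the eigenvalues of $-2K^{-1}$, with $K=2I+\A(L(T))$, interlace those of $\D(T)$; since interlacing at most halves the number of distinct values, this yields $\qD(T)\ge\lceil\diam(T)/2\rceil$. You instead use the Graham--Lov\'asz inverse formula to write the Laplacian as a rank-one perturbation of $-2\D(T)^{-1}$, apply the tree bound of \cite[Theorem 2]{LJ02} to the Laplacian (whose graph is $T$ itself), and prove a self-contained lemma that a rank-one perturbation at most doubles the number of distinct eigenvalues. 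The skeletons are parallel --- an auxiliary matrix reflecting the tree's structure is shown to have many distinct eigenvalues, and a factor-of-two transfer step moves the bound to $\D(T)$ --- and the two arguments are cousins at a deeper level, since Merris's interlacing is itself rooted in the same Laplacian--distance-matrix relationship. The trade-offs: the paper's proof is shorter given its citations and does not need invertibility of $\D(T)$ as an explicit input, while yours replaces the black box \cite{M1990} with an elementary eigenspace argument (at the price of invoking Graham--Pollak \cite{GP71} and the Graham--Lov\'asz formula) and your bookkeeping is slightly sharper: you obtain $\qD(T)\ge\lceil(\diam(T)+1)/2\rceil=\lfloor\diam(T)/2\rfloor+1$, which improves on both the stated proposition and the paper's own conclusion $\lceil\diam(T)/2\rceil$ when the diameter is even.
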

\bpf
Let $L(T)$ be the line graph of $T$ and $D$ be the distance matrix of $T$.  Then $\diam(L(T))=\diam(T)-1$.  By \cite[Theorem 2.2.1]{BR91}, the adjacency matrix $\A(L(T))$ of $L(T)$ has at least $\diam(L(T))+1=\diam(T)$ distinct eigenvalues.  Let $K=2I+\A(L(T))$.  By \cite{M1990}, the eigenvalues of $-2K^{-1}$ interlace the eigenvalues of $D$ (with $D$ having one more).  Since $-2K^{-1}$ also has at least $\diam(T)$ distinct eigenvalues, $D$ has at least $\lceil\frac{\diam(T)}{2}\rceil$ distinct eigenvalues.
\epf


\subsection{Zero forcing bound  for the number of distinct distance eigenvalues}\label{ssZ}

 Let $G$ be a graph. Then $\mathcal{S}(G)$ denotes the family of real symmetric matrices $A$ such that $\G(A)=G$. The \textit{maximum nullity} of $G$ is defined to be the largest possible nullity among matrices in $\mathcal{S}(G)$, and is denoted by $\M(G)$. The \emph{zero forcing number} of a graph $G$, $\Z(G)$, is the minimum
cardinality of a set $S$ of blue vertices (the rest of the vertices are white) such that all the vertices are turned blue after iteratively applying the ``color-change rule'': a white vertex is turned blue if it is the only white neighbor of a blue vertex. Zero forcing was introduced in \cite{AIM08}. The zero forcing number $\Z(G)$ is used to bound the maximum nullity:

\begin{thm}
\label{thm:MlZ}
\rm{\cite{AIM08}} For any graph $G$, $\M(G)\leq \Z(G)$.
\end{thm}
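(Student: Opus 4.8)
The plan is to prove the bound directly from the definitions by showing that every null vector of a matrix $A \in \sym(G)$ is completely determined by its values on a zero forcing set. Concretely, I would fix an arbitrary $A \in \sym(G)$ and a minimum zero forcing set $Z$ with $|Z| = \Z(G)$, and prove that the linear restriction map $\ker A \to \R^{|Z|}$ sending $\bx \mapsto \bx|_Z$ (the coordinates indexed by $Z$) is injective. Injectivity forces $\dim \ker A \le |Z| = \Z(G)$, and since $\M(G)$ is by definition the maximum of $\dim \ker A$ over all $A \in \sym(G)$, taking the maximum over $A$ yields $\M(G) \le \Z(G)$.

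The heart of the argument is the claim that if $A\bx = \0$ and $\bx$ vanishes on $Z$, then $\bx = \0$. I would prove this by induction along a forcing sequence that turns $Z$ entirely blue, maintaining the invariant that $\bx$ is zero on every blue vertex. The invariant holds at the start since $\bx|_Z = \0$. For the inductive step, suppose a blue vertex $u$ forces a white neighbor $w$; then $w$ is the \emph{only} white neighbor of $u$, so $u$ and every other neighbor of $u$ are blue. Reading off the $u$-th coordinate of $A\bx = \0$ gives $\sum_v a_{uv} x_v = 0$, and every term except $a_{uw} x_w$ vanishes by the invariant (using $x_u = 0$ and $x_v = 0$ for the blue neighbors $v$). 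Because $\G(A) = G$ and $uw \in E(G)$, we have $a_{uw} \ne 0$, hence $x_w = 0$, extending the invariant to $w$. When the process terminates with all vertices blue, the invariant gives $\bx = \0$, which is exactly the claim.

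The step I expect to be the main obstacle — or at least the only place where the hypotheses genuinely do the work — is the color-change step above, where I must rule out every contribution to $(A\bx)_u$ except the single term coming from the edge being forced. This is precisely where the structural constraint $\G(A) = G$ (guaranteeing $a_{uw} \ne 0$, so that $x_w$ can be solved for) and the defining property of the color-change rule (guaranteeing $w$ is the \emph{unique} white neighbor, so no other unknown term survives) are used simultaneously; everything else is bookkeeping. Once the claim is established, the reduction to $\M(G) \le \Z(G)$ is immediate and requires no further computation.
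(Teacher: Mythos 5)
Your proof is correct and complete. Note, however, that the paper does not prove this statement at all: it is stated as a quoted result with the citation \cite{AIM08}, so there is no in-paper argument to compare against. Your proposal --- showing the restriction map $\ker A \to \R^{|Z|}$ is injective by propagating zeros along a forcing sequence, where the color-change rule guarantees exactly one unknown coordinate $x_w$ survives in the equation $(A\bx)_u = 0$ and $\G(A)=G$ guarantees its coefficient $a_{uw}$ is nonzero --- is precisely the standard original proof of this bound from the cited reference, so you have in effect reconstructed the proof that the paper delegates to \cite{AIM08}.
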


\begin{thm}\label{zfdistbd}  The number of distinct eigenvalues of $\D(G)$ is at least\vspace{-5pt}
\[\frac {n-1}{\Z(\Gc)+1}+1\vspace{-5pt}\]
where $n=\ord G$.
\end{thm}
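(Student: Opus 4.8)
The plan is to control the multiplicity of each distance eigenvalue by $\Z(\Gc)+1$, and then to exploit that the spectral radius is simple in order to gain the extra $+1$. Since $G$ is connected, all pairwise distances are positive, so $\D(G)$ is an irreducible nonnegative symmetric matrix; by Perron--Frobenius its spectral radius $\rho$ is a \emph{simple} eigenvalue. If I can show that every eigenvalue of $\D(G)$ has multiplicity at most $\Z(\Gc)+1$, then writing the distinct eigenvalues as $\rho,\mu_2,\dots,\mu_q$ with multiplicities $1,m_2,\dots,m_q$ gives $n = 1 + \sum_{i\ge 2} m_i \le 1 + (q-1)(\Z(\Gc)+1)$, and solving for $q$ yields exactly $q \ge \frac{n-1}{\Z(\Gc)+1}+1$.

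The key observation is that subtracting $J$ turns $\D(G)$ into a matrix of $\Gc$. Indeed, for $i\ne j$ the entry $(\D(G)-J)_{ij}=d_{ij}-1$ vanishes precisely when $v_i,v_j$ are adjacent in $G$ (distance $1$) and is positive when they are nonadjacent (distance $\ge 2$), while the diagonal is unconstrained. Hence $B:=\D(G)-J \in \mathcal{S}(\Gc)$, and so is $B-\lambda I$ for any $\lambda$. Now fix an eigenvalue $\lambda$ of $\D(G)$ with eigenspace $W$ of dimension $m$. For $x\in W$ one has $(B-\lambda I)x = (\D(G)-\lambda I)x - Jx = -(\bone^\t x)\bone$, so on the codimension-one subspace $W\cap \bone^\perp$ the vector $(B-\lambda I)x$ is zero. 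Thus $W\cap\bone^\perp \subseteq \ker(B-\lambda I)$, which gives $m-1 \le \dim(W\cap \bone^\perp) \le \operatorname{null}(B-\lambda I)\le \M(\Gc)\le \Z(\Gc)$, the last two inequalities coming from the definition of maximum nullity and from Theorem \ref{thm:MlZ}. Therefore $m\le \Z(\Gc)+1$, as needed.

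The only delicate point is the rank-one term $J$: the matrix $\D(G)$ itself has $K_n$ as its graph, so no zero forcing bound applies to it directly, and the whole argument hinges on removing $J$ to land in $\mathcal{S}(\Gc)$. Handling the resulting inhomogeneity $(B-\lambda I)x = -(\bone^\t x)\bone$ costs exactly one dimension, absorbed harmlessly by restricting to $\bone^\perp$; this single lost dimension is also precisely what is matched by the simplicity of $\rho$ in the final count. No further estimates are required, as the combinatorial content is entirely contained in the inequality $\M(\Gc)\le\Z(\Gc)$.
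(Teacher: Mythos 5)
Your proof is correct and follows essentially the same route as the paper's: subtract $J$ so that $\D(G)-J-\lambda I$ lies in $\mathcal{S}(\Gc)$, bound each multiplicity by $\M(\Gc)+1\le \Z(\Gc)+1$, and use Perron--Frobenius simplicity of the spectral radius for the final count. The only cosmetic difference is that you verify the rank-one perturbation step by exhibiting $W\cap\bone^\perp$ inside $\ker(\D(G)-J-\lambda I)$, whereas the paper just invokes that subtracting a rank-$1$ matrix changes nullity by at most $1$.
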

\bpf Let $\theta $ be an eigenvalue of $\D(G)$. Then the multiplicity $m_\theta$ is  $\operatorname{null}(\D(G)-\theta I)$.  Now observe that $\D(G)-J-\theta I$ is a matrix in $\mathcal{S}(\overline{G})$, so $\operatorname{null}(\D(G)-J-\theta I)\leq \M(\overline{G})\leq \Z(\overline{G})$ by Theorem \ref{thm:MlZ}.  Since $J$ is a rank-1 matrix,
\[m_\theta=\operatorname{null}(\D(G)-\theta I)\leq \operatorname{null}(\D(G)-J-\theta I)+1\leq \Z(\overline{G})+1.\] Each eigenvalue has  multiplicity at most $\Z(\overline{G})+1$.  Also, since $\D(G)$ is a irreducible matrix, by the Perron-Frobenius Theorem, the largest eigenvalue has multiplicity one.  As a consequence, the number of distinct eigenvalues is at least $\frac{n-1}{\Z(\overline{G})+1}+1$.
\epf

The bound given in Theorem \ref{zfdistbd} can be tight, as seen in the next example.
\begin{ex}
All Hamming graphs $H(d,n)$ on $d\ge 3$ dimensions (including $Q_d:=H(d,2)$) have $3$ distance eigenvalues \cite{I2009}.  Assume $d\ge 3$ and in $Q_d$ let $a:=1000\cdots 0$, $b:=0100\cdots 0$, $c:=0010\cdots 0$, $d:=1100\cdots 0$, $p:=0000\cdots 0$, and $q:=0110\cdots 0$.  Observe that $V(Q_d)\setminus \{a,b,c,d\}$ is a zero forcing set for $\overline{Q_d}$ using the following forces: $p\rightarrow d$, $d\rightarrow c$, $q\rightarrow a$, and $c\rightarrow a$.  Therefore, $\Z(\overline{Q_d})\leq |\overline{Q_d}|-4$ and $\left\lceil \frac{|{Q_d}|-1}{\Z(\overline{Q_d})+1}+1 \right\rceil=3$.    Thus the bound given in Theorem \ref{zfdistbd} is tight for hypercubes $Q_d$ with $d\geq 3$; more generally, the bound is tight for $H(d,n)$ for  $d\geq 3$.
\end{ex}


\section{Determinants and inertias of distance matrices of barbells and lollipops}\label{slollipop}

A \emph{lollipop graph}, denoted by $L(k,\ell)$ for  $k\geq 2$ and $\ell \geq 0$, is constructed by attaching a $k$-clique by an edge to one pendant vertex of a path on $\ell$ vertices.\footnote{In the literature the name `lollipop' is often used to denote a graph obtained by appending a cycle rather than a clique to the pendant vertex of the path.} A \emph{barbell graph}, denoted by $B(k,\ell)$ for $k\geq 2$ and $\ell \geq 0$, is a graph  constructed by attaching a $k$-clique by an edge to each of the two pendant vertices of a path on $\ell$ vertices.  Define the family of {\em generalized barbell graphs}, denoted by  $B(k; m; \ell)$ for $k,m\geq 2$ and $\ell \geq 0$, to be the graph constructed by attaching a $k$-clique  by an edge to one end vertex of a path on $\ell$ vertices, and attaching an $m$-clique by an edge to the other end vertex of the path.  Thus $B(k,\ell)= B(k;k;\ell)$ for $k\ge 2$ and $\ell\ge 0$, and $L(k,\ell)= B(k;2;\ell-2)$ for $\ell\ge 2$.
In this section we establish the determinant and inertia of the  distance matrices of $B(k;m;\ell)$, and hence for barbells and lollipops.

The technique we use is the method of quotient matrices, which has been applied to distance matrices previously (see, for example, \cite{AP15}).  Let $A$ be a block matrix whose rows and columns are partitioned according to some partition $X=\lbrace X_1,\ldots,X_m \rbrace$ with {\em characteristic matrix} $S$, i.e., $i,j$-entry of $S$ is 1 if $i\in X_j$ and 0 otherwise.  The \emph{quotient matrix} $B=[b_{ij}]$ of $A$ for this partition is the $m\x m$ matrix whose entries are the average row sums of the blocks of $A$, i.e. $b_{ij}$ is  the average row sum of the block $A_{ij}$ of $A$ for each $i,j$. The partition $X$ is called \emph{equitable} if the row sum of each block $A_{ij}$ is constant, that is $AS=SB$. It is well known  
that if the partition is equitable, then the spectrum of $A$ consists of the spectrum of the quotient matrix $B$ together with the eigenvalues belonging to eigenvectors orthogonal to the columns of $S$ (see, e.g.,  \cite[Lemma 2.3.1]{BH}).

\begin{thm}\label{det-barbell}  The distance determinants of generalized barbell graphs are given by
\[\det \D(B(k;m;\ell))=(-1)^{k+m+\ell-1}2^{\ell} (km (\ell+5)-2(k+ m)).\]
\end{thm}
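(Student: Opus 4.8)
The plan is to combine the equitable-partition machinery (stated just before the theorem) with a ``second-difference'' reduction that eliminates the path. First I would order the vertices as $a_1,\dots,a_k$ (the $k$-clique, with $a_1$ the vertex joined to the path), $p_1,\dots,p_\ell$ (the path), and $b_1,\dots,b_m$ (the $m$-clique, with $b_1$ joined to the path), and record the distances: all distances inside a clique are $1$; $d(p_i,p_j)=|i-j|$; $d(a_i,p_j)=j+[i\ne 1]$ and $d(b_i,p_j)=(\ell+1-j)+[i\ne 1]$; and $d(a_i,b_j)=\ell+1+[i\ne 1]+[j\ne 1]$. The partition $\{P_A,\{a_1\},\{p_1\},\dots,\{p_\ell\},\{b_1\},P_B\}$ with $P_A=\{a_2,\dots,a_k\}$ and $P_B=\{b_2,\dots,b_m\}$ is equitable, because every non-attachment clique vertex has identical distances to all vertices outside its own singleton role. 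A zero-sum vector supported on $P_A$ (or on $P_B$) is orthogonal to the characteristic matrix and is an eigenvector of $\D$ for eigenvalue $-1$; these account for a subspace of dimension $(k-2)+(m-2)=k+m-4=n-(\ell+4)$, i.e.\ all of $S^\perp$. Hence $\det\D=(-1)^{k+m-4}\det B$, where $B$ is the $(\ell+4)\times(\ell+4)$ quotient matrix of block row sums.

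Next I would eliminate the path from $B$. Writing $q_0=a_1,q_1=p_1,\dots,q_\ell=p_\ell,q_{\ell+1}=b_1$ for the backbone, the key computation is that replacing the $p_i$-row of $B$ by $R_{q_{i-1}}-2R_{q_i}+R_{q_{i+1}}$ kills every entry except a single $2$ in column $p_i$: the second difference of $|i-j|$ is $2\delta_{ij}$, while the second differences of the (affine-in-$i$) clique block sums all vanish. These operations amount to $B'=EB$, where $E$ is the identity except in its $p_i$-rows, which carry $-2$ on the diagonal and $1$ in the two backbone-neighbour columns. Expanding $\det E$ along the four identity rows leaves the $\ell\times\ell$ tridiagonal matrix $\mathrm{tridiag}(1,-2,1)$, so $\det E=(-1)^\ell(\ell+1)$; expanding $\det B'$ along the $\ell$ sparse path rows gives $\det B'=2^\ell\det M$, where $M$ is the $4\times4$ quotient submatrix on $\{P_A,a_1,b_1,P_B\}$. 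Therefore $\det B=2^\ell\det M/\big((-1)^\ell(\ell+1)\big)$.

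Finally I would evaluate $M$, whose rows are the block sums (for instance the $P_A$-row is $(k-2,\,1,\,\ell+2,\,(m-1)(\ell+3))$). Subtracting $(k-1)$ times the $a_1$-column from the $P_A$-column and $(m-1)$ times the $b_1$-column from the $P_B$-column, then taking a few row differences, isolates a row whose only nonzero entry is $-(\ell+1)$; one cofactor expansion reduces $M$ to a $3\times3$ determinant evaluating to $km(\ell+5)-2(k+m)$, so $\det M=-(\ell+1)\big(km(\ell+5)-2(k+m)\big)$. Substituting back, the $(\ell+1)$ factors cancel and the signs combine as $(-1)^{k+m-4}\cdot(-1)/(-1)^\ell=(-1)^{k+m+\ell-1}$, which gives the claimed value. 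The only genuinely delicate step is the middle one: checking that the second-difference operator simultaneously clears the clique blocks and contributes exactly the factor $(-1)^\ell(\ell+1)$, whose cancellation against the $(\ell+1)$ emerging from $\det M$ is precisely what makes the final determinant a clean multiple of $2^\ell$. The remaining determinant evaluations are short and routine, and I would sanity-check the whole chain against the lollipop and path specializations (e.g.\ $B(2;2;\ell)=P_{\ell+4}$, where the formula must reproduce the Graham--Pollak value).
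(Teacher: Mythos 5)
Your proposal is correct, and its first half is exactly the paper's argument: the same equitable partition (the two sets of non-attachment clique vertices as blocks, everything else singletons), the same observation that zero-sum vectors on $P_A$ and $P_B$ give $(k-2)+(m-2)$ eigenvectors for $-1$ spanning all of $S^\perp$, and the same resulting factor $(-1)^{k+m}\det B$. Where you genuinely diverge is in evaluating the $(\ell+4)\times(\ell+4)$ quotient determinant. The paper uses only determinant-preserving row subtractions (each row minus a nearby row, in two staggered passes), which exposes a block $2I_{\ell-1}$ and reduces everything to one explicit $5\times 5$ determinant equal to $2\bigl(km(\ell+5)-2(k+m)\bigr)$, giving the answer with no division. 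You instead hit the path rows with the second-difference operator along the backbone $a_1,p_1,\dots,p_\ell,b_1$; this is conceptually cleaner, since the clique-block columns are affine in the backbone index (note your key check that the $a_1$ and $b_1$ rows are precisely the $i=0$ and $i=\ell+1$ extensions of the path-row formula, so the endpoint second differences also vanish) while the path columns contribute $2\delta_{ij}$, collapsing the problem to the symmetric $4\times 4$ matrix $M$. The price is that your transformation $B'=EB$ is not unimodular, so you must track $\det E=(-1)^{\ell}(\ell+1)$ and rely on the cancellation of $\ell+1$ against $\det M=-(\ell+1)\bigl(km(\ell+5)-2(k+m)\bigr)$; I verified both this value of $\det M$ and your sign bookkeeping $(-1)^{k+m}\cdot(-1)\cdot(-1)^{\ell}=(-1)^{k+m+\ell-1}$, so the argument goes through. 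Two small remarks: the second-difference rows must all be formed from the \emph{original} rows of $B$ simultaneously, as your formulation $B'=EB$ does (doing the replacements sequentially in place would be wrong, so this deserves a sentence in a final write-up); and your route handles $\ell=0$ uniformly, whereas the paper disposes of $\ell=0$ as a separate trivial case before running its row operations for $\ell\ge 1$.
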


\bpf We label the vertices of $B(k;m;\ell)$ as follows: $v_1,\ldots,v_k$ and $v_{k+1},\ldots,v_{k+m}$ are the cliques on $k$ and $m$ vertices, respectively; $v_{k+m+1},\ldots,v_{k+m+\ell}$ are the vertices on the path with $v_i$ adjacent to $v_{i+1}$; finally $v_k$ is adjacent to $v_{k+m+1}$ and $v_{k+m}$ is adjacent to $v_{k+m+\ell}$.

Let $\mathcal{Q}(B(k;m;\ell))$ be the quotient matrix of the distance matrix of $B(k;m;\ell)$, partitioning the rows and columns of $\D(B(k;m;\ell))$ according to $X=\lbrace X_1,X_2,\ldots,X_{\ell+4} \rbrace$ where $X_1=\lbrace v_1,\ldots,v_{k-1}\rbrace$, $X_{2}=\lbrace v_{k+1},\ldots,v_{k+m-1}\rbrace$, $X_3=\lbrace v_k\rbrace$, and $X_i=\lbrace v_{k+m+i-4} \rbrace$ for $i=4,\ldots,\ell+4$. Clearly, $\D(B(k;m;\ell))$ has $(k-2)+(m-2)$ eigenvectors for eigenvalue $-1$ that are orthogonal to the columns of the characteristic matrix $S$, so $\det \D(B(k;m;\ell))=(-1)^{(k-2)+(m-2)}\det \mathcal{Q}(B(k;m;\ell))=(-1)^{k+m}\det \mathcal{Q}(B(k;m;\ell))$. Thus it suffices to compute the determinant of the $(\ell+4)\x(\ell+4)$ quotient matrix.  For  $\ell=0$, simply compute the determinant of the $4\x 4$ quotient matrix (the upper left block of the block matrix in \eqref{mtx_1b} below).
Now assume $\ell\ge1$.
\bean
\det \mathcal{Q}(B(k;m;\ell)) &=& \det \left[
 \begin{array}{cccc|ccccc} \label{mtx_1b}
  k{-}2   &  (\ell{+}3)(m{-}1)&1   & \ell{+}2 & 2 & 3 &\cdots & \ell{+}1 \\
  (\ell{+}3)(k{-}1) &  m{-}2 &\ell{+}2 & 1  & \ell{+}1 & \ell &\cdots & 2 \\ 
  k{-}1     & (\ell{+}2)(m{-}1) & 0& \ell{+}1 & 1 & 2 &\cdots & \ell \\ 
 (\ell{+}2)(k{-}1) & m{-}1  & \ell{+}1& 0  & \ell   & \ell{-}1&\cdots & 1 \\ \hline
 2(k{-}1) &(\ell{+}1)(m{-}1) & 1&\ell & 0 & 1 & \cdots & \ell{-}1 \\ 
 3(k{-}1) &\ell(m{-}1) & 2&\ell{-}1 & 1 & 0 & \cdots & \ell{-}2 \\ 
  \vdots &\vdots&\vdots&\vdots&\vdots&\vdots&\ddots&\vdots\\
  (\ell{+}1)(k{-}1)  & 2(m{-}1)& \ell& 1 & \ell{-}1 & \ell{-}2 & \cdots & 0 \\
 \end{array} \right]\\[0.5em] 
 &=&
 \det \left[
 \begin{array}{cccc|cccccc} \label{mtx_2b}
 k{-}2     & (\ell{+}3)(m{-}1) & 1& \ell{+}2 & 2 & 3 &\cdots & \ell &\ell{+}1 \\
 (\ell{+}3)(k{-}1)  & m{-}2 & \ell{+}2& 1  & \ell{+}1 & \ell &\cdots &3& 2 \\ 
 1     & -(m{-}1) & {-}1& {-}1 & {-}1 & {-}1 &\cdots & {-}1&{-}1 \\
 {-}(k{-}1)     & 1 & {-}1& {-}1 & {-}1 & {-}1 &\cdots & {-}1&{-}1 \\\hline
 k{-}1     & {-}(m{-}1) & 1& {-}1 & {-}1 & {-}1 &\cdots & {-}1&{-}1 \\
 k{-}1    & {-}(m{-}1)  & 1& {-}1 &  1 & {-}1 &\cdots & {-}1&{-}1 \\ 
 k{-}1     & -(m{-}1) & 1& {-}1 &  1 &  1 &\cdots & {-}1&{-}1 \\
\vdots &\vdots&\vdots&\vdots&\vdots&\vdots&\ddots&\vdots&\vdots\\
 k{-}1     & -(m{-}1) & 1& {-}1 &  1 &  1 &\cdots & 1&{-}1 \\ 
 \end{array} \right] 
 \eean
 \bean
 &=&
 \det \left[
 \begin{array}{c|c} \label{mtx_3b}
\begin{array}{cccc} 
k-2    & (\ell{+}3)(m{-}1) & 1& \ell{+}2   \\
 (\ell{+}3)(k{-}1)  & m-2 & \ell{+}2& 1    \\ 
 1  & -(m{-}1) & {-}1& {-}1  \\
 -k & m & 0& 0  \\
 2(k{-}1) &-m & 2& 0  \\ \end{array} 
 &\begin{array}{cccc} 
 2 & 3 &\cdots &\ell{+}1 \\ 
 \ell{+}1 & \ell &\cdots & 2\\
 {-}1 & {-}1 &\cdots & {-}1\\
0 & 0 &\cdots & 0\\ 
0 & 0 &\cdots & 0\\ 
\end{array}  \\\hline
0_{(\ell{-}1)\times 4} & 2I_{\ell{-}1} \hspace{1cm} \begin{array}{c} 0\\ \vdots \\ 0\\\end{array}
\end{array} \right]\\[0.5em]
&=&
 (-2)^{\ell{-}1}\det \left[
 \begin{array}{ccccc} 
k-2    & (\ell{+}3)(m{-}1) & 1& \ell{+}2 &\ell{+}1  \\
 (\ell{+}3)(k{-}1)  & m-2 & \ell{+}2& 1 &2   \\ 
 1  & -(m{-}1) & {-}1& {-}1 &{-}1 \\
 -k & m & 0& 0  &0\\
 2(k{-}1) &-m & 2& 0  &0\\ \end{array}  \right]\nonumber\\[0.5em] 
 &=&
 ({-}1)^{\ell{-}1}2^{\ell{-}1}2 (km (\ell{+}5)-2(k{+}m)).\nonumber
\eean
The matrix (\ref{mtx_2b}) is obtained from \eqref{mtx_1b} by iteratively subtracting the $i$th row from $(i{+}1)$th row starting from  $i=\ell{+}3$ to $i=5$ and subtracting $i$th row from $(i{+}2)$th row for $i=3,2,1$. The matrix (\ref{mtx_3b}) is obtained from (\ref{mtx_2b}) by iteratively subtracting the $j$th row from $(j{+}1)$th starting from $j=\ell{+}3$ to $j=3$.  \epf


\begin{thm}
The inertia of $\D(B(k;m;\ell))$ is $(n_+,n_0,n_-)=\left(1,0,k+m+\ell-1\right)$.
\end{thm}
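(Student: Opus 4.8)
The plan is to show $(n_+,n_0,n_-)=(1,0,n-1)$ for $n:=k+m+\ell$ by induction on $k+m$, combining Cauchy interlacing with the sign of the determinant already supplied by Theorem~\ref{det-barbell}. First I would record two consequences of that theorem. Since $km-(k+m)=(k-1)(m-1)-1\ge 0$ for $k,m\ge 2$, we have $km\ge k+m$, so the factor $km(\ell+5)-2(k+m)\ge 5(k+m)-2(k+m)>0$ is strictly positive; hence $\det\D(B(k;m;\ell))\ne 0$, which already gives $n_0=0$, and moreover $\operatorname{sign}\det\D(B(k;m;\ell))=(-1)^{k+m+\ell-1}=(-1)^{n-1}$. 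This sign is the lever for the whole argument.

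For the base case $k=m=2$, the graph $B(2;2;\ell)$ is the path $P_{\ell+4}$ on $n=\ell+4$ vertices (each ``clique'' is a single edge, joined through the path). A path is a tree, and the distance matrix of a tree on $N$ vertices is classically known (Graham--Pollak) to have inertia $(1,0,N-1)$; this yields $(1,0,\ell+3)$, exactly the claimed inertia when $k=m=2$. Alternatively, the path base case can itself be obtained by the same interlacing-plus-determinant induction started from $\D(P_2)$, using $\det\D(P_N)=(-1)^{N-1}(N-1)2^{N-2}$.

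For the inductive step I would assume $k\ge 3$ (if instead $k=2<m$, interchange the two cliques, so $k+m$ strictly decreases in either case). The structural observation I would use is that the $k-1$ clique vertices $v_1,\dots,v_{k-1}$ not attached to the path are \emph{mutual twins}: each is at distance $1$ from the others and at the same distance from every vertex outside the clique, since all such shortest paths route through $v_k$. Deleting one such vertex therefore changes no distance among the remaining vertices, so the principal submatrix of $\D(B(k;m;\ell))$ obtained by deleting its row and column is exactly $\D(B(k-1;m;\ell))$, which by the induction hypothesis has inertia $(1,0,n-2)$. Writing $\mu_1\ge\cdots\ge\mu_n$ for the eigenvalues of $\D(B(k;m;\ell))$ and $\lambda_1>0>\lambda_2\ge\cdots\ge\lambda_{n-1}$ for those of the submatrix, Cauchy interlacing gives $\mu_1\ge\lambda_1>0$ and $\mu_j\le\lambda_{j-1}<0$ for all $j\ge 3$. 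Thus $\D(B(k;m;\ell))$ has at least one positive eigenvalue, at least $n-2$ negative eigenvalues, and (since its determinant is nonzero) a nonzero remaining eigenvalue $\mu_2$.

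The punchline, and the main obstacle, is pinning down the sign of $\mu_2$: interlacing controls every eigenvalue except this one and leaves its sign genuinely undetermined. Here I would invoke the determinant parity. If $\mu_2>0$ the inertia would be $(2,0,n-2)$, forcing $\operatorname{sign}\det=(-1)^{n-2}$, which contradicts the value $(-1)^{n-1}$ computed from Theorem~\ref{det-barbell}; hence $\mu_2<0$ and the inertia is $(1,0,n-1)$, completing the induction. The only other points needing care are the verification that the deleted-twin submatrix really is the distance matrix of the smaller barbell (the twin property above) and the treatment of the base paths via the classical tree inertia.
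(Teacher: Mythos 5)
Your proof is correct and takes essentially the same route as the paper's: induction on $k+m$ with the base case $B(2;2;\ell)=P_{\ell+4}$ handled by the Graham--Pollak tree inertia, Cauchy interlacing against the smaller generalized barbell, and the sign of the determinant from Theorem~\ref{det-barbell} used to force the single eigenvalue left undetermined by interlacing to be negative. You also make explicit two points the paper leaves implicit, namely the twin-vertex argument showing $\D(B(k-1;m;\ell))$ really is a principal submatrix of $\D(B(k;m;\ell))$, and the positivity of the factor $km(\ell+5)-2(k+m)$ that guarantees the determinant is nonzero with sign $(-1)^{k+m+\ell-1}$.
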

\bpf
We use induction on $k+m$. The base case is $k=m=2$, which is a path. Since the inertia of any tree on $n$ vertices is $(1,0,n-1)$ \cite{GP71},   the inertia of $\D(B(2;2;\ell))$ is $\left(1,0,\ell+3\right)$.  Thus the assertion follows for $k+m=4$. 
Let $\theta_1\geq\cdots\geq\theta_{k+m+\ell+1}$ be the eigenvalues of $\D(B(k+1;m;\ell))$ and $\mu_1\geq\cdots\geq\mu_{k+m+\ell}$ be the eigenvalues of $\D(B(k;m;\ell))$. By interlacing  we have $\mu_i\geq\theta_{i+1}$, for $i=1,\ldots,k+m+\ell$. The induction hypothesis implies that $\theta_3\geq\cdots\geq\theta_{k+m+\ell}$ are negative numbers and $\theta_1>0$. By Theorem~\ref{det-barbell}, the determinant will change sign, so we obtain $\theta_2<0$, completing the proof.\epf

\begin{cor}\label{det-bbarbell} The distance determinants of  barbell graphs are given by \[\det \D(B(k,\ell))=(-1)^{\ell-1}2^{\ell} (k^2 (\ell+5)-4k).\]\end{cor}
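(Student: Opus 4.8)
The plan is to obtain this corollary as a direct specialization of Theorem \ref{det-barbell}, with essentially no new work required. The key observation, already recorded in the opening paragraph of Section \ref{slollipop}, is the identification $B(k,\ell)=B(k;k;\ell)$ valid for all $k\ge 2$ and $\ell\ge 0$. Thus the generalized barbell determinant formula applies verbatim once I set $m=k$, and the entire proof reduces to a substitution together with a parity simplification.

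Concretely, I would substitute $m=k$ into
\[\det \D(B(k;m;\ell))=(-1)^{k+m+\ell-1}2^{\ell}\bigl(km(\ell+5)-2(k+m)\bigr),\]
which immediately yields $\det \D(B(k,\ell))=(-1)^{2k+\ell-1}2^{\ell}\bigl(k^2(\ell+5)-4k\bigr)$. The only point worth a moment's attention is the sign: since $2k$ is even, $(-1)^{2k}=1$, so the exponent $2k+\ell-1$ collapses to $\ell-1$ and the prefactor becomes $(-1)^{\ell-1}$. This gives the claimed expression $(-1)^{\ell-1}2^{\ell}(k^2(\ell+5)-4k)$ exactly.

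There is no genuine obstacle here; the content of the corollary lies entirely in Theorem \ref{det-barbell}, whose proof (via the equitable partition into clique-interiors, clique-attachment vertices, and path vertices, followed by the row-reduction of the $(\ell+4)\times(\ell+4)$ quotient matrix) does the real work. The corollary simply packages the symmetric case $m=k$ in cleaner form. If I wished to be fully self-contained rather than citing the theorem, I could instead specialize the quotient matrix at the outset by collapsing $X_1$ and $X_2$ into a single class whenever the two cliques have equal size, but this would only reproduce the same determinant computation with $m=k$ hard-coded, so invoking Theorem \ref{det-barbell} is the efficient route.
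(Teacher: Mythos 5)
Your proposal is correct and matches the paper's (implicit) argument exactly: the paper states this corollary without proof precisely because it follows from Theorem \ref{det-barbell} by setting $m=k$, using $B(k,\ell)=B(k;k;\ell)$ and the sign simplification $(-1)^{2k+\ell-1}=(-1)^{\ell-1}$. Nothing further is needed.
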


\begin{cor}\label{det-lollipop} The distance determinants of  lollipop graphs are given by  \[\det \D(L(k,\ell))=(-1)^{k+\ell-1}2^{\ell-1}\left(k(\ell+2)-2\right).\]
\end{cor}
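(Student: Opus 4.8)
The plan is to derive this as a direct specialization of Theorem~\ref{det-barbell}. Recall from the definitions at the start of Section~\ref{slollipop} that $L(k,\ell) = B(k;2;\ell-2)$ for $\ell \ge 2$: the $2$-clique $K_2$ at the far end of the barbell is a single edge, so together with the connecting edge and the internal path on $\ell-2$ vertices it forms exactly the pendant path on $\ell$ vertices of the lollipop (a vertex count of $k+(\ell-2)+2 = k+\ell$ confirms the identification). Hence for $\ell \ge 2$ I would simply substitute $m = 2$ and replace $\ell$ by $\ell-2$ in the formula of Theorem~\ref{det-barbell}.

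Carrying out the substitution, the sign exponent becomes $k + 2 + (\ell-2) - 1 = k + \ell - 1$, the power of two becomes $2^{\ell-2}$, and the final factor becomes
\[
k\cdot 2\bigl((\ell-2)+5\bigr) - 2(k+2) = 2k(\ell+3) - 2(k+2) = 2\bigl(k(\ell+2) - 2\bigr).
\]
Absorbing the extra factor of $2$ into the power of two yields $(-1)^{k+\ell-1} 2^{\ell-1}\bigl(k(\ell+2)-2\bigr)$, which is the claimed determinant. This is the entirety of the argument for $\ell \ge 2$, and the only genuine work is this routine algebraic simplification.

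Because the identity $L(k,\ell) = B(k;2;\ell-2)$ requires $\ell \ge 2$, the two boundary cases $\ell \in \{0,1\}$ fall outside the substitution and must be handled separately. For $\ell = 0$ the graph is just $K_k$, whose distance matrix is $J - I$ with determinant $(-1)^{k-1}(k-1)$, agreeing with the formula evaluated at $\ell = 0$; the case $\ell = 1$ (a clique with a single pendant vertex) can be checked by a small direct computation, for instance by the same quotient-matrix method used in Theorem~\ref{det-barbell}. I do not expect any real obstacle here: the corollary is an immediate consequence of the barbell formula, and the only points requiring care are recording the correct index shift $\ell \mapsto \ell - 2$ and confirming the low-$\ell$ base cases.
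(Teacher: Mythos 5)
Your proposal is correct and follows essentially the same route as the paper: the case $\ell\ge 2$ by substituting $m=2$ and $\ell\mapsto\ell-2$ into Theorem~\ref{det-barbell} via the identification $L(k,\ell)=B(k;2;\ell-2)$, the case $\ell=1$ by a simplified run of the same quotient-matrix computation, and the case $\ell=0$ by noting $L(k,0)=K_k$ has distance spectrum $\{k-1,(-1)^{(k-1)}\}$ and hence determinant $(-1)^{k-1}(k-1)$, matching the formula. Your explicit algebraic simplification fills in a step the paper leaves to the reader, but the structure of the argument is identical.
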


\bpf  The case $\ell\ge 2$ follows  from Theorem \ref{det-barbell} because $L(k,\ell)=B(k;2;\ell-2)$.  The proof for $\ell=1$ is a simplified version of the proof of that theorem. The eigenvalues of the $k$-clique $L(k,0)$  are $\{k-1, (-1)^{(k-1)}\}$, so $\det \D(L(k,0))=(-1)^{k-1}(k-1)$, which agrees with $(-1)^{k+\ell-1}2^{\ell-1}\left(k(\ell+2)-2\right)$ for $\ell=0$.
\epf


\section*{Acknowledgment}  We thank  Steve Butler for stimulating discussions, encouragement and assistance, and gratefully acknowledge financial support for this research from the following grants and organizations: NSF DMS 1500662, NSF CMMI-1300477 and CMMI-1404864 (Franklin H. J. Kenter), NSF Graduate Research Fellowship Program under Grant No. DGE-1041000 (Jessica De Silva), Elsevier, International Linear Algebra Society, National Center for Theoretical Sciences  (Jephian C.-H. Lin), Iowa State University, University of Colorado Denver, University of Nebraska-Lincoln.  


\end{document}